\newcommand{\muf}[1][\varphi]{\mathrm{m}_{\scriptstyle #1}}
\newcommand{\mux}[1][x]{\mu^u_{#1}}
\newcommand{\nux}[1][x]{\nu^u_{#1}}
\newcommand{\Jac}{\mathrm{Jac}}
\newcommand{\Jacu}[1][x_0,y_0]{\mathrm{Jac}^{u}_{#1}}
\newcommand{\Good}{\mathrm{Good}}
\newcommand{\Bad}{\mathrm{Bad}}
\newcommand{\Coc}[1][\Phi]{\mathscr{Coc}(#1)}
\newcommand{\Eq}{\mathscr{Eq}}
\newcommand{\h}{\mathfrak{h}}
\newcommand{\g}{\mathfrak{g}} 
\begin{document}


	\title{Contributions to the ergodic theory of hyperbolic flows: unique ergodicity for quasi-invariant measures and equilibrium states for the time-one map}
	\author[1]{Pablo D. Carrasco\thanks{P.D.C. is partially supported by FAPEMIG Universal APQ-02160-21}}
	\author[2]{Federico Rodriguez-Hertz\thanks{F. R.-H. is partially supported by NSF grant DMS-1900778}}
    \affil[1]{ICEx-UFMG, Avda. Presidente Antonio Carlos 6627, Belo Horizonte-MG, BR31270-90}
	\affil[2]{Penn State, 227 McAllister Building, University Park, State College, PA16802}

	\date{\today}

	\maketitle

	\begin{abstract}
	We consider the horocyclic flow corresponding to a (topologically mixing) Anosov flow or diffeomorphism, and establish the uniqueness of transverse quasi-invariant measures with H\"older Jacobians. In the same setting, we give a precise characterization of the equilibrium states of the hyperbolic system, showing that existence of a family of Radon measures on the horocyclic foliation such that any probability (invariant or not) having conditionals given by this family, necessarily is the unique equilibrium state of the system.
	\end{abstract}

    \section{Introduction and Main Results} 
\label{sec:introduction_and_main_results}

In this article we study aspects related to the thermodynamic formalism for systems having some hyperbolicity, as the time-one map of an hyperbolic flow, or an Anosov diffeomorphism. This is one of the most important pieces in smooth ergodic theory, encompassing several examples and general theorems. Although this part of the theory is fairly complete, the results discussed here are not present in the literature (in this generality, as far as we know), and therefore the present project has a two-fold purpose; on the one hand, introducing the aforementioned results, and on the other serve a (crude) blueprint for future research in more general cases. 

We will rely on our previous work \cite{EqStatesCenter}, and in fact the present manuscript can be seen as a complement to this other one. Nevertheless, we will give a self-contained presentation and no a priori knowledge of this other work is necessary. 

By an hyperbolic flow we mean a ($\mathcal{C}^1$) flow $\Phi=\{\Phi_t:M\to M\}_{t\in\Real}$ of a closed manifold satisfying: there exist a Riemannian metric on $M$, a splitting of the form $TM=E^{s}\oplus E^c\oplus E^{u}$, and $0<\lambda<1$ such that the following conditions are true.
\begin{enumerate}
	\item $E^c$ is generated by the vector field tangent to the flow.
	\item $D_x\Phi_t\big(E^{s/u}(x)\big)=E^{s/u}(\Phi_t(x))$, for all $t\in\Real$, for every $x\in M$.
	\item $\sup_{x\in M}\norm{D_x\Phi_t|E^{s}(x)}, \sup_{x\in M}\norm{D_x\Phi_{-t}|E^{u}(x)}\leq \lambda^t$, for all $t\geq 0$. 
\end{enumerate}
We call $E^{s}, E^{u}$ the strong stable and strong unstable bundle, respectively, and $E^c$ will be referred as the center bundle. A hyperbolic flow is said to be of codimension-one if $\dim E^{u}=1$. The prototypical example of codimension-one hyperbolic flow is the geodesic flow corresponding to a (closed) surface of negative sectional curvature, but even in dimension three there are many other Anosov flows which are not of this type. We refer the reader to the treatise \cite{Fisher2019} for an up to date introduction and examples.

The concept of hyperbolic diffeomorphism is analogous; we say that a diffeomorphism $f:M\to M$ of an closed manifold $M$ is hyperbolic (or Anosov) if there exist a Riemannian metric on $M$, a splitting of the form $TM=E^{s}\oplus E^{u}$, and $0<\lambda<1$ such that the following conditions are true.
\begin{enumerate}
	\item $D_xf\big(E^{s/u}(x)\big)=E^{s/u}(f(x))$, for every $x\in M$.
	\item $\sup_{x\in M}\norm{D_xf|E^{s}(x)}, \sup_{x\in M}\norm{D_xf|E^{u}(x)}\leq \lambda$. 
\end{enumerate}

\begin{remark}
For the rest of the article we will work only with (codimension-one) hyperbolic flows, leaving the direct modifications for the diffeomorphism case to the reader.
\end{remark}

Now fix a codimension-one hyperbolic flow; by the classical Hadamard-Perron's stable manifold theorem, it follows in this setting that $E^u$ integrates, provided that it is orientable, to a (in general, only transversally continuous) flow $\Phi^u=\{\Phi^u_t:M\to M\}_{t\in \Real}$, called the horocyclic flow. It turns out that if we further assume transitivity of $\Phi$, meaning the existence of a dense orbit, there are only two possibilities \cite{AnosovFlow}; either
\begin{enumerate}
	\item $M$ is a fiber bundle over $S^1$, with fibers transverse to the flow lines (the suspension case), or
	\item $\Phi^u$ is minimal, that is, every orbit of $\Phi^u$ is dense in $M$. 
\end{enumerate}
The geodesic flow corresponding to a negatively curved surface falls into this last category.

\smallskip 
 
\noindent\textbf{Convention.} For the rest of this article we assume that the unstable bundle of any considered hyperbolic flow is orientable. Modulo a two-fold covering, this not a significant loss of generality.

\smallskip

We then have the following.

\begin{theorem}
If $\Phi$ is a (transitive) codimension one Anosov flow that is not a suspension, then $\Phi^u$ is uniquely ergodic. That is, there exists only one (probability) measure invariant under $\Phi^u$.
\end{theorem}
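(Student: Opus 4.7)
The plan is to reduce the statement to the uniqueness of transverse quasi-invariant measures with H\"older Jacobians for the horocyclic foliation, which is the other main theorem announced in the abstract. Existence of at least one $\Phi^u$-invariant probability is not the point: $\Phi^u$ is a continuous flow on a compact manifold, so Krylov-Bogolyubov immediately delivers one; the content is uniqueness. Moreover, since $\Phi$ is transitive and not a suspension, the alternative recalled in the excerpt from \cite{AnosovFlow} guarantees that $\Phi^u$ is already minimal, which will let us ignore support issues and treat $\nu^\perp$ below as positive on every local transversal.

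Let $\nu$ be an arbitrary $\Phi^u$-invariant probability. Inside a flow box adapted to the decomposition $E^{s}\oplus E^c\oplus E^{u}$, I would fix a local center-stable transversal $T$ and disintegrate $\nu$ along the horocyclic plaques. The $\Phi^u$-invariance of $\nu$ forces the conditional measures along each horocyclic plaque to be invariant under the one-dimensional flow $\Phi^u$, hence proportional to arc length. Thus $\nu$ locally takes the product form $d(\text{arclength})\otimes d\nu^\perp$ for a Radon measure $\nu^\perp$ on $T$. A standard covering argument — matching overlapping flow boxes via the holonomy of $\Phi^u$ and using uniqueness of disintegration — then shows $\nu^\perp$ is holonomy-invariant under the horocyclic foliation between compatible transversals. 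In particular $\nu^\perp$ is quasi-invariant with Jacobians identically $1$, which is trivially H\"older, so the main uniqueness theorem for transverse quasi-invariant measures applies and produces $\nu^\perp$ uniquely up to a multiplicative constant. Together with $\nu(M)=1$ this pins $\nu$ down.

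The genuine hard work is of course packed into the transverse uniqueness theorem that we invoke. Within the present reduction, the one step that requires attention is the passage from the local disintegration to a globally coherent, holonomy-invariant transverse measure on the horocyclic foliation; this is essentially classical, but it relies on the continuity of the horocyclic lamination and on the local product structure, and it is where one uses minimality of $\Phi^u$ to exclude degenerate normalizations. Once $\nu^\perp$ is identified as such a transverse measure, the H\"older-Jacobian hypothesis of the main theorem is automatic (constant Jacobian $\equiv 1$), so unique ergodicity falls out at once.
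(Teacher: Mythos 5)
The paper never actually proves this theorem. It is recorded as a classical result, with attributions to Furstenberg, Marcus, and Bowen--Marcus, and the sentence that follows it in the text (``Here we generalize the above result'') signals that the authors view it as a special case of Theorem~A rather than as something to be re-proved. So there is no in-paper argument for you to match; what you have written is precisely the natural way to cash out that ``special case'' claim, namely derive unique ergodicity from Theorem~A by taking the trivial Jacobian. Your reduction is correct in spirit: the passage from a $\Phi^u$-invariant probability $\nu$ to a transverse measure $\nu^\perp$ via disintegration in flow boxes is the standard one-to-one correspondence that the paper itself invokes in the line immediately after the theorem, and minimality (from the non-suspension dichotomy) does give $\nu$ full support so $\nu^\perp$ is positive on every transversal.

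Two points deserve sharpening. First, the phrase ``trivially H\"older'' misrepresents the hypothesis of Theorem~A, which does not ask for an arbitrary H\"older Jacobian but for a Jacobian in the specific family $\mathscr{h}$ of~\eqref{eq:hjacobiano} attached to some $h\in\Coc$. You must therefore observe that the constant cocycle $h\equiv 1$ lies in $\Coc$ (take $\varphi\equiv 0$ in~\eqref{eq:cocicles}) and that the corresponding $H_{x_0,y_0}$ is identically $1$, so that an invariant transverse measure is exactly an $\mathscr{h}$-conformal one for this $h$; then Theorem~A (or Corollary~A) gives uniqueness up to scale. Second, Theorem~A is stated for $\mathcal{C}^2$ Anosov flows, whereas the theorem above imposes no such regularity; your route therefore only recovers the classical statement under an extra smoothness hypothesis, while the cited proofs of Marcus and Bowen--Marcus do not need it. Finally, a cosmetic remark: the conditionals of $\nu$ along horocyclic plaques are Lebesgue in the flow-time parametrization (translation-invariance under $\Phi^u_t$ is what the invariance of $\nu$ forces); calling them ``arc length'' presumes $\Phi^u$ is unit-speed, which is harmless but not stated in the paper.
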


Transverse and invariant measures are in one-to-one correspondence for flows, so the above theorem says that there is only one transverse invariant measure for the flow foliation. The previous theorem is an important result originally due to H. Furstenberg \cite{Furstenberg1973} for the geodesic flow corresponding to hyperbolic surfaces, and later generalized by B. Marcus \cite{Marcus1975} to the variable curvature case, and in fact to arbitrary codimension-one Anosov flows (that are not suspension, that is). The most general result is due to R. Bowen and B. Marcus, where they do not assume one-dimensionality of the unstable bundle \cite{Bowen1977}. 

In this article we generalize the above result. Let us introduce some definitions.

\begin{definition}\label{def:transverse measure}
Let $\Psi$ be a flow on $M$. A (local) transversal to the flow is a sub-manifold $D\subset M$ that is transverse to the flow direction. A pairwise disjoint family $\mathscr{D}=\{D\}_{\scriptstyle D\ \mathrm{transversal}}$ is a global transversal if given $x\in M$ there exist $t\in\Real$ and $D\in \mathscr{D}$ such that $\Phi_t(x)\in D$. The global transversal $\mathscr{D}$ is full if for every $x\in M$ there exists $D\in\mathscr{D}$ with $x\in D$.

A transverse measure for $\Phi$ is a family of measures $\nu=\{\nu_D: D\in\mathscr{D}\}$ where $\mathscr{D}$ is a global transversal to $\Phi$ and for each $D\in \mathscr{D}$, $\nu_D$ is a Radon measure on $D$: $\mathscr{D}$ is the support of $\nu$. We will assume that the support of the transverse measures considered is full\footnote{This is not loss of generality for our purposes.} 
\end{definition}

If $\nu=\{\nu_D: D\in\mathscr{D}\}$ is a transverse measure to $\Phi$ we also write $\nu=\{\nu_x\}_{x\in M}$ where
$\nu_x=\nu_{D}, x\in D$.

\begin{definition}
Let $\Psi$ be a flow on $M$ and $\{\nu_x\}_x$ be a transverse measure for $\Psi$. We say that $\{\nu_x\}_x$ is quasi-invariant if there exist a family of positive functions $\Jac_{\nu}=\{\Jac_{x_0,y_0}:M\to \Real_{>0}:y_0=\Psi_{t_0}(x_0), x_0\in M, t_0\in\Real\}$ and $C>0$ such that 
\[
	\Psi_{-t_0}\nu_{y_0}= C\Jac_{x_0,y_0}\nu_{x_0}.
\]
In this case $\Jac_{\nu}$ is the Jacobian of the quasi-invariant measure, whereas $C$ is the associated normalization constant.
\end{definition}

\begin{remark}
Note that if $\Jac_{\nu}$ is a Jacobian, then it defines a multiplicative cocycle over the flow. 
\end{remark}

An invariant (transverse) measure corresponds to $\Jac_{\nu}\equiv 1$. We will be interested in quasi-invariant transverse measures for $\Phi^u$ having well-behaved Jacobian, and having some relation with the dynamics $\Phi$.
 As a convenient family of transversals for the horocyclic flow we consider the leaves of the center stable foliation $\Fcs=\{W^{cs}(x)\}_{x\in M}$, which are tangent to $E^{cs}=E^{c}\oplus E^s$. This choice of transversals is implicit in what follows, and in particular transverse measures to $\Phi^u$ are assumed to be supported in center stable leaves. The existence and basic properties of invariant foliations associated to $\Phi$ are recalled in section 2. 

We first state the following.

\begin{proposition}[Proposition 2.10 in \cite{EqStatesCenter}]\label{pro:quasinvariancecocycle}
Suppose that $\mu^{cs}=\{\mu^{cs}_x\}$ is a transverse measure for $\Phi^u$ such that $\forall x\in M$
\[
	\Phi_{-1}\mu^{cs}_{\Phi_1(x)}=C h\mu_{x}^{cs},
\]
where $C>0$ and $h:M\to\Real_{+}$ is H\"older. Then $\mu^{cs}$ is $\Phi^u$-quasi invariant, with Jacobian
\[
	\Jacu[x_0,y_0](x)=\prod_{j=1}^{\oo}\frac{h(\Phi_{-j}\circ \Phi^u_{t(x)} x)}{h(\Phi_{-j}(x))}.
\]
where for $x\in\Wcs{x_0}, \Phi^u_{t(x)}(x)\in\Wcs{y_0}$.
\end{proposition}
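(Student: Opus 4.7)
The plan is to iterate the hypothesis backward in time infinitely many times and identify the limiting Radon--Nikodym density with the stated infinite product.

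I would first recast the functional equation in pushforward form: applying $(\Phi_1)_*$ to $(\Phi_{-1})_*\mu^{cs}_{\Phi_1(x)}=Ch\mu^{cs}_x$ and using the transport rule $F_*(g\nu)=(g\circ F^{-1})\,F_*\nu$, one obtains, for every $z\in M$,
\[
\mu^{cs}_z \;=\; C\,(h\circ\Phi_{-1})\cdot(\Phi_1)_*\mu^{cs}_{\Phi_{-1}(z)}.
\]
A straightforward induction (each step is the same substitution) then produces the $n$-step identity
\[
\mu^{cs}_z \;=\; C^n\prod_{j=1}^n(h\circ\Phi_{-j})\cdot(\Phi_n)_*\mu^{cs}_{\Phi_{-n}(z)} \qquad\text{on }\Wcs{z}.
\]

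Next, I would apply this simultaneously to $z=x_0$ and to $z=y_0=\Phi^u_{t_0}(x_0)$. Denote by $\pi\colon\Wcs{x_0}\to\Wcs{y_0}$, $\pi(x)=\Phi^u_{t(x)}(x)$, the local unstable holonomy; note also that $\pi=\Phi_n\circ\hat\pi_n\circ\Phi_{-n}$, where $\hat\pi_n$ is the $u$-holonomy between the deep leaves $\Wcs{\Phi_{-n}(x_0)}$ and $\Wcs{\Phi_{-n}(y_0)}$. Taking the Radon--Nikodym derivative of $(\pi^{-1})_*\mu^{cs}_{y_0}$ with respect to $\mu^{cs}_{x_0}$ at $x\in \Wcs{x_0}$, the $C^n$ cancel and the $(\Phi_n)_*$ factors combine, yielding the factorization
\[
\frac{d(\pi^{-1})_*\mu^{cs}_{y_0}}{d\mu^{cs}_{x_0}}(x)
\;=\; \prod_{j=1}^{n}\frac{h(\Phi_{-j}\pi(x))}{h(\Phi_{-j}x)}\cdot R_n(\Phi_{-n}x),
\]
where $R_n$ is the Radon--Nikodym derivative of $(\hat\pi_n^{-1})_*\mu^{cs}_{\Phi_{-n}(y_0)}$ against $\mu^{cs}_{\Phi_{-n}(x_0)}$. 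The finite product converges to the claimed $\Jacu[x_0,y_0](x)$ for free: since $\Phi_{-j}(x)$ and $\Phi_{-j}(\pi(x))$ lie on a common unstable leaf with $u$-distance $O(\lambda^j)$, the H\"older regularity of $h$ gives $|\log h(\Phi_{-j}\pi(x))-\log h(\Phi_{-j}x)|=O(\lambda^{\alpha j})$, so the infinite product is absolutely convergent, positive, and continuous in $x$.

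The main obstacle, and the step where the framework of \cite{EqStatesCenter} does the real work, is controlling the residual factor $R_n(\Phi_{-n}x)$. As $n\to\infty$ the two deep center-stable leaves coalesce (unstable separation decays like $\lambda^n$) and $\hat\pi_n$ tends to the identity uniformly, so intuitively $R_n$ should behave like a ratio of two copies of the same transverse measure. Making this precise requires compactness of $M$ together with the continuous variation of the family $\{\mu^{cs}_z\}_z$ under backward iteration (guaranteed under our hypotheses); one then concludes that $R_n(\Phi_{-n}x)$ converges, and the multiplicative structure of the preceding display forces the limit to be independent of $x$. Setting $C$ to be this common limit yields $\Phi^u_{-t_0}\mu^{cs}_{y_0}=C\,\Jacu[x_0,y_0]\,\mu^{cs}_{x_0}$, as claimed.
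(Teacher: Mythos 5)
The paper cites this proposition from \cite{EqStatesCenter} without reproving it here, so there is no in-text proof to compare against; judged on its own merits, your outline finds the natural computational skeleton, and the form of the asserted Jacobian (an infinite product over backward iterates) strongly suggests the original argument follows the same backward-iteration scheme. Steps leading up to and including the convergence of the partial products are sound: the $n$-step identity, the factorization $\pi=\Phi_n\circ\hat\pi_n\circ\Phi_{-n}$, and the absolute convergence of $\prod_{j=1}^n h(\Phi_{-j}\pi x)/h(\Phi_{-j}x)$ via backward contraction of $E^u$ together with H\"older continuity and positivity of $h$ are all correct.

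The gap is in the treatment of $R_n$, and it is structural rather than cosmetic. First, you introduce $R_n$ as a Radon--Nikodym derivative $d(\hat\pi_n^{-1})_*\mu^{cs}_{\Phi_{-n}y_0}/d\mu^{cs}_{\Phi_{-n}x_0}$, and already earlier you take $d(\pi^{-1})_*\mu^{cs}_{y_0}/d\mu^{cs}_{x_0}$; but absolute continuity of the holonomy pullback is precisely what quasi-invariance asserts, so you cannot assume these densities exist --- you must produce them. Second, ``continuous variation of $\{\mu^{cs}_z\}_z$ under backward iteration, guaranteed under our hypotheses'' is not what the hypothesis gives: the cocycle equation controls how $\mu^{cs}_z$ evolves along the $\Phi$-orbit of a \emph{single} center-stable leaf, whereas $\Phi_{-n}x_0$ and $\Phi_{-n}y_0$ lie on distinct center-stable leaves for every $n$. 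Nothing in the definition of a transverse measure ties $\mu^{cs}_{\Phi_{-n}x_0}$ to $\mu^{cs}_{\Phi_{-n}y_0}$: they are Radon measures on different transversals, and closeness of supports is not, for an arbitrary transverse measure, a statement about closeness of measures. Third, even granting pointwise convergence of $R_n(\Phi_{-n}x)$, the claim that ``the multiplicative structure forces the limit to be independent of $x$'' is an assertion, not an argument; $x$-independence of that limit \emph{is} the content of the proposition, since it is exactly what identifies the density with the product formula. To close the gap one needs an actual comparison input at the deep-past stage --- e.g.\ arguing at the level of measures of sets rather than densities and exploiting either the specific construction of the family $\mu^{cs}$ (local product structure of the equilibrium state, as in \cite{EqStatesCenter}) or a uniform distortion estimate bounding $\mu^{cs}_{\Phi_{-n}y_0}(\hat\pi_n B)/\mu^{cs}_{\Phi_{-n}x_0}(B)$ directly.
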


Consider the set of H\"older multiplicative cocycles
\begin{equation}\label{eq:cocicles}
 \Coc=\{h:M\to\Real_{+}: h(x)=e^{\int_0^1 \varphi(\Phi_t x)dt}\text{ for some H\"older function }\varphi\}.
 \end{equation} 
This is a natural class of functions for $\Phi$, related to its equilibrium states; see below. In the next section we recall the by now classical fact that any equilibrium state of the flow $\Phi$ corresponding to a H\"older potential $\varphi$ determines $h\in\Coc$ and a $\Phi^u$-transverse quasi invariant measure $\mu^{cs}$ satisfying the hypotheses of the previous proposition. Given $h\in\Coc$ define the family of functions 
\begin{equation}\label{eq:hjacobiano}
\mathscr{h}=\{H_{x_0,y_0}:x_0,y_0\in M, y_0\in\Wu{x_0}\}
\end{equation}
with
\begin{equation}
H_{x_0,y_0}(x)=\prod_{j=1}^{\oo}\frac{h(\Phi_{-j}\circ \Phi^u_{t(x)} x)}{h(\Phi_{-j}(x))},\quad x\in\Wcs{x_0}, \Phi^u_{t(x)}(x)\in\Wcs{y_0}.
\end{equation}

Our first main result is as follows.

\begin{theoremA}\hypertarget{theoremA}{}		
Let $\Phi$ be a codimension-one Anosov flow of class $\mathcal{C}^2$ that is not a suspension, and consider $h\in\Coc$. Assume that the unstable bundle of $\Phi$ is orientable and denote by $\Phi^u$ the induced horocyclic flow. Then there exists $\mu^{cs}=\{\mu_x^{cs}\}_x$ a transverse measure for $\Phi^u$ such that $\mu^{cs}$ is the unique quasi-invariant measure with Jacobian given by the family $\mathscr{h}$ determined by $h$.
\end{theoremA}

Given a flow $\Psi=\{\psi_t\}_t$ on $M$ and $J$ a non-negative multiplicative cocycle over $\Psi$, a (Borel) measure $\mu$ is said to be $J$-conformal if $\psi_t^{-1}\mu=J_t\cdot \mu$; the related definition for diffeomorphism is similar. The notion was introduced by Patterson \cite{Patterson1976} in the context of limit sets for Fuchsian groups, and has an important role in geometry and ergodic theory. The reader is directed to the book \cite{FeliksPrzytycki2011} for an extensive discussion of conformal measures in ergodic theory, and to the survey \cite{overviewpattersonsullivan} for a discussion of applications in geometry, in particular of the Patterson-Sullivan theory. 

We single out the following remarkable existence and uniqueness result for conformal measures due to A. Douady and J-C. Yoccoz \cite{Douady1999}.

\begin{theorem}
Let $f:\Tor\to\Tor$ be a $\mathcal{C}^2$ diffeomorphism of the circle with irrational rotation number. Then for every $s\in \Real$ there exists a unique $s\cdot Df$-conformal measure. 
\end{theorem}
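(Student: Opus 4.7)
The plan is to construct the conformal measure as a fixed point of a weighted transfer operator and to establish uniqueness via a rigidity argument specific to $\mathcal{C}^2$ circle dynamics. As a setup, Denjoy's theorem provides a conjugating homeomorphism $h:\Tor\to\Tor$ with $h\circ f=R_\alpha\circ h$; in particular $f$ is minimal and uniquely ergodic with invariant probability $\mu_0=h^{-1}_\ast\mathrm{Leb}$. I would additionally note that the Lyapunov exponent $L=\int\log|Df|\,d\mu_0$ vanishes: since $f^{\pm n}$ are degree-one diffeomorphisms, $\int|Df^{\pm n}|\,d\mathrm{Leb}=1$, so Jensen gives $\int\log|Df^{\pm n}|\,d\mathrm{Leb}\le 0$, while unique ergodicity upgrades $\tfrac{1}{n}\log|Df^n|\to L$ to uniform convergence, forcing $\pm L\le 0$ and hence $L=0$.

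For existence, consider the weighted transfer operator $\mathcal{L}_s\phi(x)=|Df(f^{-1}x)|^{-s}\phi(f^{-1}x)$ on $C(\Tor)$; by a direct change of variables $\mu$ is $|Df|^s$-conformal iff $\mathcal{L}_s^\ast\mu=\mu$. Normalizing $\tilde{\mathcal{L}}_s^\ast\mu:=\mathcal{L}_s^\ast\mu/(\mathcal{L}_s^\ast\mu)(\Tor)$ yields a continuous self-map of the weak-$\ast$ compact convex set $\mathcal{P}(\Tor)$, so Schauder-Tychonoff produces a fixed point $\mu$ satisfying $f^{-1}_\ast\mu=\lambda|Df|^s\mu$ for some $\lambda>0$. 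Iterating gives $\lambda^n\int|Df^n|^s\,d\mu=1$ for every $n$, and the uniform-zero convergence of $\tfrac{1}{n}\log|Df^n|^s$ established above forces $\lambda=1$, producing a genuine $|Df|^s$-conformal probability.

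For uniqueness, suppose $\mu_1,\mu_2$ are conformal probabilities, set $\mu=(\mu_1+\mu_2)/2$ and $g=d\mu_1/d\mu$. Writing the conformal equation for both $\mu_1$ and $\mu$ and comparing Radon-Nikodym derivatives gives $g\circ f=g$ $\mu$-a.e., so $\mu_1=\mu_2$ reduces to $f$-ergodicity of $\mu$. Transporting via $h$, this becomes ergodicity of the $K$-conformal probability $\nu=h_\ast\mu$ for $R_\alpha$, with $K=|Df|^s\circ h^{-1}$. The decisive ingredient is a Denjoy-Koksma estimate: since $\log K$ is of bounded variation on $\Tor$, its Birkhoff sums along the continued-fraction denominators $q_n$ of $\alpha$ are uniformly bounded, which combined with the minimality of $R_\alpha$ permits a Hopf-type argument showing that bounded $R_\alpha$-invariant measurable functions are $\nu$-a.e.\ constant.

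The main obstacle lies precisely in this ergodicity step. A conformal measure is only quasi-invariant and may be singular with respect to $\mu_0$, so neither unique ergodicity of $f$ nor Birkhoff's theorem applied to $\mu_0$ is of direct use for $\mu$. The resolution exploits the arithmetic rigidity of $\mathcal{C}^2$ circle dynamics—bounded variation of $\log|Df|$ and Denjoy-Koksma estimates along best rational approximants to $\alpha$—and this is where the one-dimensionality and the $\mathcal{C}^2$ smoothness enter in a substantial way; the remainder of the argument is Schauder-Tychonoff together with the elementary computation driven by $L=0$.
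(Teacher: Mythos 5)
The paper does not prove this theorem; it is quoted from Douady and Yoccoz \cite{Douady1999} purely as motivation for Corollary A, so there is no in-paper proof to compare against. Judging your argument on its own terms, the existence half holds up. Normalizing the transfer operator and applying Schauder--Tychonoff produces a probability $\mu$ with $f^{-1}_*\mu=\lambda\,|Df|^s\mu$ for some $\lambda>0$; your Jensen argument applied to both $f^n$ and $f^{-n}$ correctly forces the Lyapunov exponent of the unique $f$-invariant probability to vanish; and the identity $\lambda^n\int|Df^n|^s\,d\mu=1$ together with the uniform convergence $\tfrac1n\log|Df^n|\rightrightarrows 0$ (unique ergodicity applied to the Birkhoff averages of $\log|Df|$) then yields $\lambda=1$. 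The reduction of uniqueness to ergodicity of an arbitrary $(Df)^s$-conformal probability, via the averaged measure and the $f$-invariance of the Radon--Nikodym derivative, is also standard and correct.

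The genuine gap is where you flag it, but it is more serious than the sketch suggests. Denjoy--Koksma bounds the cocycle $\log K_{q_n}$ only at the return times $q_n$; the intermediate values $K_1,\dots,K_{q_n-1}$ are not controlled, so a naive covering argument by iterates $R_\alpha^k(I)$, $0\le k<q_{n+1}$, of a short interval $I$ does not close. More fundamentally, a ``Hopf-type argument'' is the wrong template: the Hopf argument propagates constancy of Birkhoff averages by contraction along stable leaves, while here there is no contraction and the conformal measure $\nu=h_*\mu$ may be singular and carried on a Cantor set whose $\nu$-density points interact with the rotation in ways that bounded cocycle values along $q_n$ alone do not control. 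Establishing ergodicity of every $(Df)^s$-conformal probability is precisely the hard content of \cite{Douady1999}; it rests on a considerably finer Denjoy-type distortion analysis along the full renormalization tower, not merely on boundedness at the times $q_n$ plus minimality. As written, the decisive step of your uniqueness proof is asserted rather than proved, and you should either supply that analysis or cite it explicitly.
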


The following is a re-writing of \hyperlink{theoremA}{Theorem A}, which in view of the importance of the concept of conformal measures we isolate it as a corollary.

\begin{corollaryA}
Under the same hypotheses of \hyperlink{theoremA}{Theorem A}, the horocyclic flow $\Phi^u$ has a unique $\mathscr{h}$-conformal measure.
\end{corollaryA}

The previous two results are evidence that parabolic systems may not have many more than one conformal measure; we can ask

\begin{question}
If $\Phi$ is a minimal parabolic flow and $J$ is non-negative multiplicative cocycle, does there exist a unique $J$-conformal measure.
\end{question}

This question is particularly interesting for unipotent flows, in view of the very general rigidity results due to Ratner \cite{Ratner1991}, and others derived from her work.

\paragraph{Equilibrium States}

The proof of the above \hyperlink{theoremA}{Theorem A} relies on the study of equilibrium states for the time-one map of the corresponding hyperbolic flow. Let us recall some basic definitions and refer the reader to \cite{EqStatesCenter} and references therein for a more through discussion.

For a continuous endomorphism of a compact metric space $f:M\to M$ we denote by $\PTM{f}{M}$ the set of $f$-invariant probability measures, equipped with the $\omega^{\ast}$-topology. Given a continuous real valued map  $\varphi:M\to \Real$ (the potential), the topological pressure of the system is given as \cite{WaltersPres}
\begin{equation}\label{eq:variationalprinciple}
\Ptop(\varphi)=\sup_{\mathclap{\nu\in \PTM{f}{M}}}\{h_{\nu}(f)+\int \varphi d\nu\} 
\end{equation}
where $h_{\nu}(f)$ denotes the metric entropy of $f$ with respect to $\nu$. We denote
\begin{equation}
 	\Eq_f=\{\mu\in\PTM{f}{M}:\Ptop(\varphi)=h_{\mu}(f)+\int \varphi d\mu\};
 \end{equation} 
elements in $\Eq_f$ are called \emph{equilibrium states} (for the system $(f,\varphi)$). 

Similarly, if $\Phi=(\Phi_t)_t$ is a flow we denote $\PTM{\Phi}{M}=\cap_{t}\PTM{\Phi_t}{M}$ the set of its invariant measures; the topological pressure of the system $(\Phi,\varphi)$ is given as 
\begin{equation}
\Ptop(\Phi, \varphi)= \sup_{\mathclap{\nu\in \PTM{\Phi}{M}}}\{h_{\nu}(\Phi_1)+\int \varphi d\nu\}, 
\end{equation}
and we define $\Eq(\Phi,\varphi)$ analogously. One can show (cf. proposition $6.2$ in \cite{EqStatesCenter}) that for a (mixing) Anosov flow one has $\Eq(\Phi,\varphi)=\Eq(\Phi_1,\tilde{\varphi})$ where
\[
	\tilde{\varphi}(x)=\int_0^1 \varphi(\Phi_t x) dt.
\]

The next theorem is a central piece in smooth ergodic theory.

\begin{theorem*}[\cite{SinaiMarkov,SRBattractor,Bowen1974}]
    Let $\Phi$ be a transitive Anosov flow of class $\mathcal{C}^1$ and $\varphi:M\to\Real$ be a H\"older function, 
    Then $\Eq(\Phi,\varphi)$ consists of a unique element $\muf[(\Phi,\varphi)]$. 
\end{theorem*}

Here we are able to give the following strong characterization of $\muf[\Phi,\varphi]$ in terms of conditional measures along unstables. Let us recall that given a probability measure $\mm$ on $M$ a measurable partition (in the sense of Rokhlin \cite{Rokhlin}) $\xi$ is said to be 
\begin{itemize}
 	\item increasing if $\Phi_1\xi<\xi$ (i.e.\ for $\mm\aep(x)$ the atom $\Phi_1(\xi(x))$ consists of atoms of $\xi$);
 	\item subordinated to $\Fu$ is for $\mm\aep(x)$ it holds $\xi(x)\subset \Wu{x}$;
 	\item an SLY\footnote{By Sinai, Strelcyn, Ledrappier and Young} partition if it is increasing, subordinated to $\Fu$, and for $\mm\aep(x)$ the atom $\xi(x)$ contains a relative neighborhood of $x$ in $\Wu{x}$. See 
 \end{itemize} 
It is by now classical (and holds in much more generality than the case we are considering here) that for Anosov flows there exist SLY partitions or arbitrarily small mesh. See for example \cite{LedStrelcyn}.

We can prove the following.

\begin{theoremB}\hypertarget{theoremB}{}
Assume the same hypotheses of \hyperlink{theoremA}{Theorem A}. Then there exists a family of measures $\{\nu^u_x\}_{x\in M}$, where each $\nu^u_x$ is a Radon measure on $\Wu{x}$, such that for any probability $\mm$  on $M$ (not necessarily invariant under any $\Phi_t$) whose conditionals in some $\mm$ SLY partition $\xi$ are of the form
\[
	\mm^{\xi}_x=\frac{\nu^u_x}{\nu^u_x(\xi(x))},
\]
necessarily satisfies $\mm=\muf[\Phi,\varphi]$. 
\end{theoremB}

It is worth to emphasize that above we are not requiring the measure to be invariant (or quasi-invariant), and we do not assume any information in the transverse direction; \hyperlink{theoremB}{Theorem B} evidences a strong rigidity property of equilibrium states for hyperbolic systems.

\paragraph{Comparison with existing literature}

\hyperlink{theoremA}{Theorem A} was established originally by M. Babillot and F. Ledrappier \cite{geodesicbabillotledrappier} in the case of (Abelian covers of) the geodesic flow in a hyperbolic closed manifold, and later proven by a more geometrical argument by B. Schapira \cite{SCHAPIRA2004} in the same setting. Our method has some similarities with Schapira's, but is less dependent on non-trivial geometrical considerations for the geodesic flow, and is more direct. This allows us to consider general (codimension one) Anosov flows and diffeomorphisms. Of course, the tailored arguments for the geodesic flow give additional information in that case, and are generalizable for some non-compact hyperbolic manifolds; see \cite{schapirafrench}, \cite{Paulin2015}. Applicability of the ideas in this paper to the non-compact case remains to be investigated. 

The case of diffeomorphisms (Anosov) can be deduced from the work of Babillot and Ledrappier above cited, using the symbolic model of the map to translate the problem to subshifts of finite type, and then applying the powerful tools of spectral theory of transfer operators in this setting. For this, one needs to consider only the positive part of the shift (that is, the unstable sets) since the behavior of invariant measures on the other ``transverse'' direction (stable) is determined by the former. This is a particularity of the symbolic model, and has been exploited extensively. For example, in \cite{series1980} C. Series obtains the analogue of \hyperlink{theoremA}{Theorem A} for the lifted foliations to the symbolic model, and classifies the Borel equivalence relations induced by these. This was pushed further in \cite{symmetricgibbs}, still in the symbolic setting. To recover the same result for the diffeomorphism (uniqueness of the quasi-invariant transverse measure) one needs to control an additional direction; this is non-trivial but could be done, particularly in cases where the measure is known to be well behaved (as the entropy maximizing measure or the SRB). 

The case of flows if different, since typically (for example, for mixing Anosov flows) there is no possible symbolic model for the time-$t$ maps, and hence one cannot reduce their study to subshifts. What one has is a suspension of a subshift covering the flow, but this suspension is via a non-cohomologus to constant function, which depends on some arbitrary choices, and therefore cannot be used to give much information about the individual time-$t$ maps. See Bowen's article \cite{SymbHyp}.

We follow here a completely different approach, without reducing the problem to the symbolic flow, and this allows us to control the transverse direction as well. We remark also that in Babillot and Ledrappier's work (and also, Schapira's) the transverse direction is controlled using the rigid geometry of the map, and that's probably the main reason why their method cannot also be applied to non-symmetric Anosov flows. 

Finally, let us point out that \hyperlink{theoremB}{Theorem B} is new even for (linear!) Anosov diffeomorphisms, as we are not requiring invariance of the measure.

\paragraph*{Acknowledgments} The authors thank Barbara Schapira for bringing to our attention the references \cite{geodesicbabillotledrappier,SCHAPIRA2004,schapirafrench,Paulin2015}. We also thank the referee for her/his careful reading, and for pointing out to us some mistakes and omissions.


\section{Measures along strong unstable leaves and characterization of equilibrium states}\label{sec:Basic_theory}

In this part we give the preliminaries necessary for the proof \hyperlink{theoremA}{Theorem A} and \hyperlink{theoremB}{Theorem B}.

\smallskip 

From now on $\Phi=(\Phi_t:M\to M)_{t\in\Real}$ denotes a transitive hyperbolic flow of codimension-one that is not a suspension, with associated invariant decomposition $TM=E^{s}\oplus E^c\oplus E^{u}$, with $E^{u}$ tangent to the flow lines of the horocyclic flow $\Phi^u$. The bundles $E^{s}, E^{u}, E^{cs}=E^{c}\oplus E^{s}, E^{cs}=E^c\oplus E^{u}$ are all integrable to flow-invariant foliations $\F^{\ast}$, where the superscript $\ast$ coincides with the one of the bundle that it integrates. In particular, $\Fu$ is the foliation by orbits of $\Phi^u$. Each leaf of $\Fcs,\Fcu$ is saturated by leaves of $\Fs, \Fc$ and $\Fu, \Fc$, respectively.

\smallskip

\noindent\textbf{Notation:} If $A\subset M, \ep>0$ and $\F^{\ast}=\{W^{\ast}(x)\}_{x\in M}$ is one of the invariant foliations, let
\[
	W^{\ast}(A,\ep):=\{y\in M:\exists x\in A/ d_{\mathcal{\F^{\ast}}}(x,y)<\ep\},
\]
where $d_{\mathcal{\F^{\ast}}}$ is the intrinsic distance in the corresponding leaf. If $A=\{x\}$ we write $W^{\ast}(x,\ep)=W^{\ast}(\{x\},\ep)$.  

\smallskip 
 
Since the foliations $\Fu, \Fcs$ are transverse, there exists some $\mathrm{c}_{\scriptstyle geo}>0$ such that
\[
	x,y\in M, d(x,y)<\ep\leq\mathrm{c}_{\scriptstyle geo}\Rightarrow \# \Wu{x,2\ep}\cap \Wcs{y,2\ep}=1.
\]

For a submanifold $L\subset M$ denote $\RM[L]$ the set of Radon measures (for the induced topology) on $L$, and for $\F^{\ast}$ consider
\begin{align*}
&\mathrm{Rad}(\F^{\ast}):=\bigsqcup_{L\in\F^{\ast}} \RM[L]\\
&\mathrm{Meas}(\F^{\ast}):=\{\nu:M\rightarrow \mathrm{Rad}:\nu_x:=\nu(x)\in \RM[W^{\ast}(x)]\forall\ x\in M\}.
\end{align*}

We have the following.

\begin{proposition}[Proposition $6.1$ in \cite{EqStatesCenter}]\label{pro:medidasinestables}
There exists $\mu^u\in\mathrm{Meas}(\Fu)$ satisfying:
\begin{enumerate}
	\item[a)] $\mu^u_x$ is non-atomic and $\supp(\mu^u_x)=\Wu{x}$.
	\item[b)] $\forall t\in\Real, x\in M$, it holds 
	\[\Phi_{-t}\mu^u_{\Phi_t(x)}=e^{\Ptop(\varphi) t-\int_0^t \varphi(\Phi_s(\cdot))ds}\mu^u_x.
	\]
	\item[c)] The map $x\to \mu^u_x$ is weakly continuous, meaning: given $y\in \Wcs{x,\mathrm{c}_{\scriptstyle geo}}$ and $\hs_{x,y}:\Wu{y}\to\Wu{x}$ the locally defined Poincare' map determined by the center unstable foliation, it follows that for any  $A\subset \Wu{x}$ relatively open and pre-compact it holds
	\[
	\mu_{y}^u(\hs_{x,y}(A))\xrightarrow[y\to x]{}\mu^u_x(A).
	\]
	If moreover $A\subset \Wu{x, \mathrm{c}_{\scriptstyle geo}}$, then the convergence is uniform in $x$.
\end{enumerate}
\end{proposition}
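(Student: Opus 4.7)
The plan is to realize $\mu^u$ as the family of unnormalized conditional measures of the unique equilibrium state $\muf[\Phi,\varphi]$ along strong unstable manifolds, and then verify the three properties. Since this equilibrium state is provided by the Sinai--Bowen--Ruelle theorem quoted above and coincides with the equilibrium state of $\tilde\varphi$ for the time-one map $\Phi_1$, the construction follows a standard Ledrappier-type disintegration combined with a rescaling dictated by the transformation rule in (b).

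First I would fix an SLY partition $\xi$ of arbitrarily small mesh, whose existence is recalled just before Theorem B. Rohlin's disintegration of $\muf[\Phi,\varphi]$ along $\xi$ yields conditional probabilities $\{\mm^{\xi}_x\}$ on the atoms $\xi(x)\subset\Wuu{x}$, which are relative neighborhoods of $x$. The preliminary assignment $x\mapsto \mm^{\xi}_x$ is defined only $\muf[\Phi,\varphi]$-almost everywhere and has precompact support, so the next step is to promote it to a Radon family defined on the whole strong unstable leaves.

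For the extension, the key observation is that since $\xi$ is increasing, the sets $\{\Phi_n(\xi(\Phi_{-n}(x)))\}_n$ form an increasing sequence of relatively compact subsets of $\Wu{x}$ whose union is the entire leaf. The $\Phi_1$-invariance of $\muf[\Phi,\varphi]$ together with the standard Ledrappier--Young-type computation forces the Radon--Nikodym densities between conditionals on overlapping atoms of $\xi$ and $\Phi_{-1}\xi$ to form a multiplicative cocycle; that this cocycle is precisely $e^{\Ptop(\varphi)-\int_0^1 \varphi(\Phi_s\,\cdot)ds}$ is what is forced by the fact that $\muf[\Phi,\varphi]$ attains the supremum in the variational principle for $\tilde\varphi$ (Pesin--Rohlin formula applied to the unstable Jacobian). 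Concretely, I define $\mu^u_x$ on $\Phi_n(\xi(\Phi_{-n}(x)))$ by pulling back $\mm^{\xi}_{\Phi_n x}$ and multiplying by $e^{\Ptop(\varphi)n-\int_0^n \varphi(\Phi_s x)ds}$; the cocycle identity makes the definitions consistent as $n\to\oo$, producing a Radon measure on $\Wu{x}$ whose restriction to each atom of $\xi$ is proportional to $\mm^{\xi}_x$. Property (b) holds for all $t\in\Real$ by construction (the integer case is built in; the continuous case follows since the Jacobian factor varies continuously and $\Phi^u$ commutes with $\Phi$ up to reparametrization along the flow direction, which leaves the measure along $\Wu{}$ unaffected). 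Property (a) is inherited from the facts that $\muf[\Phi,\varphi]$ has full support and charges every non-empty open subset of $M$.

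The main obstacle is (c), the weak continuity of $x\mapsto \mu^u_x$ under the center-unstable holonomy $\hs_{y,x}$. The strategy is to use the Gibbs property of $\muf[\Phi,\varphi]$: for the flow-box sets $\Vcu$ one has uniform two-sided estimates of the form $\muf[\Phi,\varphi](\Vcu)\asymp e^{-n\Ptop(\varphi)+\int_0^n\varphi(\Phi_s x)ds}$, and through the local product structure of $\muf[\Phi,\varphi]$ in a flow box these transfer to matching estimates for $\mu^u_x$ on small unstable discs. Given $A\subset\Wu{x,\mathrm{c}_{\scriptstyle geo}}$ relatively open and precompact, I thicken both $A$ and $\hs_{y,x}(A)$ by a small center-stable plaque to produce two nearby flow boxes, and compare their $\muf[\Phi,\varphi]$-masses through these Gibbs estimates; H\"older continuity of $\varphi$ makes the relevant ratio of Birkhoff sums tend to one as $y\to x$, while continuity of the center-unstable holonomy handles the remaining geometric factor. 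Uniformity on $\Wu{x,\mathrm{c}_{\scriptstyle geo}}$ follows from the uniformity of the Gibbs constants. The Rohlin-theoretic bookkeeping needed to upgrade almost-everywhere definitions to a pointwise continuous family, and the precise Gibbs estimates on flow boxes, are what make this statement technically demanding; they are carried out in Section 6 of \cite{EqStatesCenter}.
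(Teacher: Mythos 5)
The paper itself does not prove this statement: it is imported verbatim as Proposition~6.1 of~\cite{EqStatesCenter}, so there is no in-paper argument to compare against. What can be said is that your proposed route --- first take the equilibrium state $\muf[\Phi,\varphi]$ from the Sinai--Bowen--Ruelle theorem, disintegrate it along an SLY partition, and then rescale and extend to obtain $\mu^u_x$ --- is the \emph{reverse} of the order used in~\cite{EqStatesCenter} and in the present paper's exposition. There (and in the related Margulis, Hamenst\"adt, Bowen--Haydn, Climenhaga--Pesin--Zelerowicz tradition), the leafwise measures $\mu^u$ are built \emph{first}, directly as eigendata of a leafwise transfer operator obtained by pushing forward reference volumes on pieces of unstable leaves, and the equilibrium state is assembled \emph{afterwards} by pairing $\mu^u$ with a compatible family $\mu^{cs}$ and appealing to the product structure (this is exactly the content of the Remark following \cref{def:compatiblemeasures}). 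That order of construction is what gives $\mu^u_x$ a pointwise, everywhere-defined, continuously varying family for free.

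The genuine gap in your reverse approach is precisely the step you compress at the end: promoting the a.e.-defined Rohlin conditionals $\mm^{\xi}_x$ to an everywhere-defined continuous assignment $x\mapsto\mu^u_x$. Rohlin disintegration gives you a measurable family on a conull set only, and the pullback/rescaling scheme $\mu^u_x\big|_{\Phi_n\xi(\Phi_{-n}x)}\sim e^{\Ptop(\varphi)n-\int_0^n\varphi(\Phi_s x)\,ds}\,(\Phi_n)_*\mm^{\xi}_{\Phi_{-n}x}$ is only consistent on that conull set; for the remaining $x$ there is nothing to pull back. You propose to fill the gap via Gibbs flow-box estimates for $\muf[\Phi,\varphi]$ together with the local product structure, but those uniform two-sided estimates and the product structure are themselves usually \emph{derived} from an already-constructed pair $(\mu^u,\mu^{cs})$ with good continuity. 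Absent an independent source for the Gibbs estimates (e.g.\ via symbolic dynamics on a Markov section, which would be a different argument requiring its own reconciliation with the leafwise geometry), the argument is circular at exactly the point where all the technical work lives. By contrast the direct transfer-operator construction sidesteps this entirely: each $\mu^u_x$ is defined for \emph{every} $x$ as a limit of pushed-forward reference measures, property~(b) is the eigenvalue equation, non-atomicity and full support in~(a) follow from topological mixing and expansion along $\Wu{}$, and the uniform continuity in~(c) is inherited from the uniform convergence of the defining limits, not deduced a posteriori from properties of the equilibrium state.
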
 

With this we were able to establish the next theorem.

\begin{theorem}[Theorem $6.3$ in \cite{EqStatesCenter}]\label{thm:Bforrankone}
If $\Phi$ is $\mathcal{C}^2$ Anosov flow with minimal unstable foliation, then the family $\mu^u$ of the previous theorem satisfies: if $\mm\in\PM[M]$ satisfies
\begin{enumerate}
 	\item $\mm$ is $\Phi_t$-invariant for some $t\neq 0$, and
 	\item $\mm$ has conditionals absolutely continuous with respect to $\mu^u$,
 \end{enumerate}
 then $\mm=\muf[(\Phi,\varphi)]$. 
\end{theorem}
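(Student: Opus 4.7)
Take $\nu^u_x := \mu^u_x$ from Proposition \ref{pro:medidasinestables}; the equilibrium state $\muf[\Phi,\varphi]$ itself has (normalized restrictions of) this family as conditionals along any SLY partition, so the task is to show that any probability $\mm$ with the prescribed conditional structure must equal $\muf[\Phi,\varphi]$. The plan is to extract from $\mm$ a $\Phi^u$-transverse quasi-invariant measure $\tilde{\mu}^{cs}$ whose Jacobian is exactly the family $\mathscr{h}$, and to conclude via Theorem A that $\tilde{\mu}^{cs}$ must coincide (up to a positive scalar) with the canonical transverse measure $\mu^{cs}$, thereby forcing $\mm=\muf[\Phi,\varphi]$.

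I would first produce a local product structure. In a sufficiently small foliation box $R$ around a point $x_0$, with local center-stable transversal $T\subset\Wcs{x_0}$, the SLY hypothesis guarantees that each atom of $\xi$ meeting $R$ contains the plaque $R\cap\Wu{y}$. Rokhlin's disintegration together with the assumption $\mm^\xi_y=\mu^u_y/\mu^u_y(\xi(y))$ then reads
\[
\mm|_R \;=\; \int_T \frac{\mu^u_y|_R}{\mu^u_y(\xi(y))}\, d\hat{\mm}_T(y) \;=\; \int_T \mu^u_y|_R\, d\tilde{\mu}^{cs}_T(y),
\]
with $d\tilde{\mu}^{cs}_T(y) := \mu^u_y(\xi(y))^{-1}\, d\hat{\mm}_T(y)$. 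Using the weak continuity of $y\mapsto\mu^u_y$ from Proposition \ref{pro:medidasinestables}(c), these local pieces patch across transversals inside a common center-stable leaf, giving a well-defined global transverse measure $\tilde{\mu}^{cs}\in\mathrm{Meas}(\Fcs)$.

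The crucial step is then the computation of the horocyclic Jacobian. Applying Proposition \ref{pro:medidasinestables}(b) to both $y$ and $y'=\Phi^u_{t_0}(y)\in\Wu{y}$, and invoking contraction of past orbits on a common unstable leaf as time tends to $-\infty$, one obtains a cocycle relation $\mu^u_{y'}=c(y,y')\,\mu^u_y$ with
\[
c(y,y')^{-1}\;=\;\exp\!\left(\int_{-\infty}^{0}[\varphi(\Phi_s y')-\varphi(\Phi_s y)]\,ds\right)\;=\;\prod_{j\geq 1}\frac{h(\Phi_{-j}y')}{h(\Phi_{-j}y)},
\]
where $h(z)=e^{\int_0^1\varphi(\Phi_\tau z)\,d\tau}\in\Coc$. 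For $y\in\Wcs{x_0}$ and $y'\in\Wcs{y_0}$ this is precisely $H_{x_0,y_0}(y)$ from \eqref{eq:hjacobiano}. Equating the two local product expressions for $\mm$ on transversals in $\Wcs{x_0}$ and $\Wcs{y_0}$ joined by horocyclic holonomy, the leaf-measure scaling forces $\tilde{\mu}^{cs}$ to transform exactly by $c(y,y')^{-1}$; that is, $\tilde{\mu}^{cs}$ is $\Phi^u$-quasi-invariant with Jacobian $\mathscr{h}$. By Theorem A, $\tilde{\mu}^{cs}$ is therefore determined up to a positive scalar, which is in turn fixed by $\mm(M)=1$, and the local product reconstruction yields $\mm=\muf[\Phi,\varphi]$.

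The principal difficulty I anticipate lies in this Jacobian computation: carefully tracking how the atom normalizations $\mu^u_y(\xi(y))$ interact with the intrinsic scaling $\mu^u_{y'}/\mu^u_y$ between basepoints on a common unstable leaf (including handling large horocyclic displacements by composition of small ones), so as to reproduce the cocycle $\mathscr{h}$ on the nose rather than only up to a coboundary depending on the choice of $\xi$.
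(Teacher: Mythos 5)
This statement is not proved in the present paper; it is quoted verbatim as Theorem~$6.3$ of the authors' earlier work \cite{EqStatesCenter}, and the current manuscript uses it as a black box. Your proposal, however, is not a free-standing proof of it, because the route you take is logically circular inside the architecture of this paper. You reduce the conclusion to Theorem~A, but Theorem~A is established here only as a consequence of Theorem~B, and the very first step in the proof of Theorem~B ("By \cref{thm:Bforrankone} it holds that $\mm(\mathcal{N})=0$") invokes precisely the statement you are trying to prove. So the chain Theorem~$6.3$ $\Rightarrow$ Theorem~B $\Rightarrow$ Theorem~A $\Rightarrow$ (your) Theorem~$6.3$ closes into a loop, and no independent argument remains.

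There is also a substantive gap even apart from circularity. The hypothesis of the theorem is that the conditionals of $\mm$ are merely \emph{absolutely continuous} with respect to $\mu^u$, together with $\Phi_t$-invariance of $\mm$ for some $t\neq 0$. Your proof drops the invariance hypothesis entirely and silently replaces "absolutely continuous" by "equal to the normalized $\mu^u_y$" at the very first use of Rokhlin disintegration, writing $\mm^\xi_y=\mu^u_y/\mu^u_y(\xi(y))$. Passing from a.c.\ to equality is exactly where the $f$-invariance must be used: the increasing property of $\xi$ and the cocycle identity $f^{-1}\mu^u_{fx}=e^{\Ptop(\varphi)-\tilde\varphi(x)}\mu^u_x$ of \cref{pro:medidasinestables}(b) give a coboundary equation for the Radon--Nikodym density, and one must invoke ergodicity of $\mm$ (or minimality) to force the density to be leafwise constant. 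Without that reduction the construction of $\tilde\mu^{cs}$ does not have the claimed Jacobian $\mathscr{h}$; the density pollutes the transversal cocycle by a coboundary, which is precisely the difficulty you flag at the end but do not resolve. Finally, the stated theorem only assumes minimality of the unstable foliation and does not assume codimension one, whereas Theorem~A requires a codimension-one flow; so even if the circularity and the a.c.\ issue were repaired, the argument would cover only a strict subcase of the quoted result.
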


From now on we denote $f=\Phi_1$ and fix $\varphi:M\to\Real$ H\"older potential together with its corresponding measures $\mu^u$ given in \cref{pro:medidasinestables}. We denote
\[
	h_{\varphi}(x)=e^{\tilde{\varphi}(x)},
\]
and given $x\in M$ define $\Delta_x^u:\Wu{x}\to\Real_{>0}$,
\begin{equation}\label{eq:JacobianoDelta}
y\in\Wu{x}\Rightarrow \Delta_x^u(y):=\prod_{k=1}^{\oo}\frac{h_{\varphi}(f^{-k}y)}{h_{\varphi}(f^{-k}x)}.
\end{equation}
Since $\tilde{\varphi}$ is H\"older it follows that $\Delta_x^u$ is continuous, and converges to one as $y\mapsto x$.

\begin{definition}\label{def:medidasproyectivas}
For $x\in M$ let $\nu^u_x=\Delta_x^u\ \mux$.
\end{definition}

Clearly $\nu^u_x\in\RM[\Wu{x}]$ and satisfies the following properties.
\begin{enumerate}
	\item[a)] If $y\in \Wu{x}$ we have $\nu^u_y=c(y,x)\cdot \nux$ for some constant $c(y,x)=\Delta_y^u(x)>0$, hence $\{\nu^u_y\}_{y\in\Wu{x}}$ defines a projective class of measures $[\nu^u_x]$ in $\Wu{x}$.
	\item[b)] $f^{-1}\nux[fx]=e^{\Ptop(\varphi)-\tilde{\varphi}(x)}\nux$.
\end{enumerate}

\smallskip

\begin{definition}\label{def:compatiblemeasures}
A family of measures $\zeta^{cs}\in\mathrm{Meas}(\Fcs)$ is said to be compatible with $(\varphi,\mu^u)$ if its quasi-invariant for $\Phi^u$ with Jacobian given by the family $\mathscr{h_{\varphi}}$ (cf.\@ \cref{eq:hjacobiano}).  
\end{definition}

We proved before (Section $3.2$ of \cite{EqStatesCenter}) that given $\zeta^{cs}$ compatible with $(\varphi,\mu^u)$   
one can construct a probability measure $\mm$ on $M$ that is given locally as follows. For $W=\Wcs{x_0,\ep}, 0<\ep\leq\mathrm{c}_{\scriptstyle geo}$,
\[
	\mm\approx \int_W \nux[x]\ d\zeta^{cs}_{x_0}(x). 
\]
Compatibility is used to show that the definition does not depend on the chosen center stable disc, and therefore one can glue these local measures into $\mm$.

\begin{remark}
Moreover, if $\zeta^{cs}$ is a quasi-invariant measure for $f$ as in \cref{pro:quasinvariancecocycle}, then the resulting $\mm$ is an equilibrium state for $(f, \tilde{\varphi})$, and therefore the unique equilibrium state for the system $(\Phi,\varphi)$. This does not require codimension-one and was noted in our previous article.
\end{remark}

We point out that there exists a family $\mu^{cs}\in\mathrm{Meas}(\Fcs)$ satisfying the quasi-invariance condition refereed in the previous remark. The equilibrium state $\muf[\Phi,\varphi]$ can be constructed using this family.

\section{Unique ergodicity for the conditional measures} 
\label{sec:unique_ergodicity}

Let $\mathrm{c}_{\scriptstyle int}>0$ be much smaller than $\mathrm{c}_{\scriptstyle geo}$, and for $x\in M$ define
\begin{equation}
\label{eq:Ix} I_x:=\Phi^u_{[-\mathrm{c}_{\scriptstyle int},\mathrm{c}_{\scriptstyle int}]}(x).
\end{equation}
The set $I_x$ will be called a $u$-interval. Since the conditionals $\nux$ are non-atomic (cf.\@ \cref{pro:medidasinestables}), we deduce:

\begin{lemma}\label{lem:fronteraIxesnula}
It holds $\forall x\in M, \nu^u_x(\partial I_x)=0$.
\end{lemma}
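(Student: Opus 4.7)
The lemma reduces to showing that $\nu^u_x$ has no atoms, since in the codimension-one setting the strong unstable manifold $\Wu{x}$ is one-dimensional (as $\dim E^u=1$), the horocyclic flow $\Phi^u$ parametrizes it, and therefore $I_x=\Phi^u_{[-\mathrm{c}_{\scriptstyle int},\mathrm{c}_{\scriptstyle int}]}(x)$ is a closed arc in $\Wu{x}$ whose intrinsic boundary consists of exactly the two points $\Phi^u_{\pm\mathrm{c}_{\scriptstyle int}}(x)$.

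The plan is then to unwind \cref{def:medidasproyectivas}: by construction $\nu^u_x=\Delta^u_x\,\mu^u_x$, where $\Delta^u_x:\Wu{x}\to\Real_{>0}$ is the continuous, strictly positive density defined in \cref{eq:JacobianoDelta}. Since $\Delta^u_x$ takes only finite positive values, $\nu^u_x$ and $\mu^u_x$ assign the same null sets, so absolute continuity in both directions holds pointwise. In particular, for every single point $y\in\Wu{x}$, one has $\nu^u_x(\{y\})=\Delta^u_x(y)\,\mu^u_x(\{y\})$.

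Now I invoke item (a) of \cref{pro:medidasinestables}, which asserts that $\mu^u_x$ is non-atomic. Hence $\mu^u_x(\{y\})=0$ for every $y\in\Wu{x}$, and therefore $\nu^u_x(\{y\})=0$ for every $y$ as well; i.e.\ $\nu^u_x$ is itself non-atomic. Summing over the two boundary points yields
\[
\nu^u_x(\partial I_x)=\nu^u_x\bigl(\{\Phi^u_{-\mathrm{c}_{\scriptstyle int}}(x)\}\bigr)+\nu^u_x\bigl(\{\Phi^u_{\mathrm{c}_{\scriptstyle int}}(x)\}\bigr)=0,
\]
which is the desired conclusion. There is no serious obstacle here: the only subtlety is to notice that ``boundary'' is to be interpreted in the intrinsic topology of the one-dimensional leaf $\Wu{x}$ (so it is a two-point set), which is precisely the topology for which $\nu^u_x$ is defined as a Radon measure. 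Everything else is immediate from the formula $\nu^u_x=\Delta^u_x\,\mu^u_x$ and the already established non-atomicity of $\mu^u_x$.
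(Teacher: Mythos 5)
Your proof is correct and takes essentially the same route as the paper, which gives no details beyond the one-line remark that the conditionals are non-atomic (citing Proposition~\ref{pro:medidasinestables}). You simply make explicit the step the paper leaves implicit: $\nu^u_x=\Delta^u_x\,\mu^u_x$ with $\Delta^u_x$ a continuous, finite, strictly positive density, so non-atomicity of $\mu^u_x$ (item (a) of the cited proposition) transfers to $\nu^u_x$, and $\partial I_x$ is a two-point set in the one-dimensional leaf.
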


Let $\mathcal{I}=\{I_x\}_{x\in M}$ and observe that the family $\mathcal{I}$ has the following continuity property:
\[
 I_y\xrightarrow[y\to x]{\mathrm{Haus}} I_x
 \]  
where the above convergence is in the Hausdorff topology. We also have the following.

\begin{lemma}\label{lem:continuidadnuxIx}
The functions $x\to \mux(I_x)$ and $x\to\nux(I_x)$ are continuous.
\end{lemma}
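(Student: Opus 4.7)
The plan is to combine three ingredients: (i) the weak continuity of $x\mapsto\mu^u_x$ given by Proposition~\ref{pro:medidasinestables}(c); (ii) the Hausdorff continuity $I_y\xrightarrow[y\to x]{\mathrm{Haus}} I_x$ noted just before the statement; and (iii) the fact that $\partial I_x$ consists of exactly two points, and therefore has zero $\mu^u_x$-measure by the non-atomicity in Proposition~\ref{pro:medidasinestables}(a) (this is the same reasoning as in \cref{lem:fronteraIxesnula} applied to the projectively equivalent $\nu^u_x$). Together these give a Portmanteau-type argument for measures supported on varying unstable leaves.

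For $x\mapsto \mu^u_x(I_x)$, fix $x$ and let $y\to x$. Write $h_{y,x}\colon\Wu{y}\to\Wu{x}$ for the local holonomy along $\Fcs$, and set $J_y:=h_{y,x}(I_y)\subset\Wu{x}$. Continuity of the $\Fcs$-holonomy together with $I_y\to I_x$ gives $J_y\to I_x$ in the Hausdorff topology inside $\Wu{x}$. Given $\epsilon>0$, use (iii) to pick open pre-compact $U^-\subset I_x\subset U^+\subset\Wu{x,\mathrm{c}_{\scriptstyle geo}}$ with $\mu^u_x(U^+\setminus U^-)<\epsilon$, together with continuous $g^{\pm}$ satisfying $\mathbf{1}_{U^-}\leq g^{-}\leq \mathbf{1}_{I_x}\leq g^{+}\leq \mathbf{1}_{U^+}$. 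For $y$ close enough, $U^-\subset J_y\subset U^+$, hence $g^{-}\leq \mathbf{1}_{J_y}\leq g^{+}$. Integrating against $(h_{y,x})_*\mu^u_y$ and invoking (i), which after approximation of $g^{\pm}$ by combinations of indicators of pre-compact opens gives $\int g^{\pm}\,d((h_{y,x})_*\mu^u_y)\to\int g^{\pm}\,d\mu^u_x$, a sandwich yields $|\mu^u_y(I_y)-\mu^u_x(I_x)|<2\epsilon$ whenever $y$ is sufficiently close to $x$.

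For $x\mapsto\nu^u_x(I_x)$, change variables via the same holonomy:
\[
\nu^u_y(I_y)=\int_{I_y}\Delta^u_y\,d\mu^u_y=\int_{J_y}\bigl(\Delta^u_y\circ h_{y,x}^{-1}\bigr)\,d\bigl((h_{y,x})_*\mu^u_y\bigr).
\]
Since $\tilde\varphi$ is H\"older and backward iterates $f^{-k}$ contract unstable pieces exponentially, the series defining $\Delta^u$ has uniformly summable tails, and one obtains uniform convergence $\Delta^u_y\circ h_{y,x}^{-1}\to\Delta^u_x$ on pre-compact subsets of $\Wu{x,\mathrm{c}_{\scriptstyle geo}}$ as $y\to x$. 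One then repeats the sandwich argument using $g^{\pm}\cdot\Delta^u_x$ in place of $g^{\pm}$, and absorbs the density error $\sup|\Delta^u_y\circ h_{y,x}^{-1}-\Delta^u_x|\cdot\mu^u_y(I_y)$ into an $O(\epsilon)$ term, to conclude $\nu^u_y(I_y)\to\nu^u_x(I_x)$.

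The main obstacle is the coupling between the Hausdorff convergence of the varying sets $J_y$ and the weak convergence of the pushforward measures $(h_{y,x})_*\mu^u_y$, which live on different leaves---essentially a Portmanteau-type statement in this foliated setting. This is handled cleanly precisely because the non-atomicity of $\mu^u_x$ renders $\partial I_x$ negligible, so the indicator $\mathbf{1}_{J_y}$ can be sandwiched between continuous $g^{\pm}$ whose integrals under the pushforwards are controlled by (i). The upgrade to $\nu^u_x$ is then routine given the uniform convergence of the H\"older-regular densities $\Delta^u_y\circ h_{y,x}^{-1}\to\Delta^u_x$.
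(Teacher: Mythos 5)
Your proposal is correct and takes essentially the same route as the paper, which compresses exactly these ingredients—weak continuity from Proposition~\ref{pro:medidasinestables}(c), negligibility of $\partial I_x$ from \cref{lem:fronteraIxesnula}, and a Portmanteau (Alexandrov) argument to handle the varying sets—into a single line, and then passes from $\mu^u_x$ to $\nu^u_x$ via the continuous density $\Delta^u_x$ of \cref{eq:JacobianoDelta}. Your write-up simply makes the sandwich step and the holonomy bookkeeping explicit.
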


\begin{proof}
By \cref{eq:JacobianoDelta} it suffices to prove the first part, which follows by $c)$ of \cref{pro:medidasinestables}, the lemma above, and Alexandrov's theorem.
\end{proof}

Recall that $f=\Phi_1$ denotes the time-one map of a (topologically mixing) codimension-one Anosov flow. For $\h\in \CM[M]$ we define the operator 
\begin{equation}\label{eq:Rnh}
R_n\mathfrak{h}(x)=\frac{1}{\nux[f^n(x)](f^n(I_x))}\int_{f^n(I_x)}\h(t)d\nux[f^n(x)](t)=\frac{1}{\nux(I_x)}\int_{I_x}\h\circ f^n(t)d\nux(t).	
\end{equation}

From now on we fix $\muf$ an equilibrium state for $(f,\tilde{\varphi})$ constructed as explained in the previous section. 

\medskip

\noindent\textbf{Notation.} We write $W^{\ast}_{\scriptstyle \mathrm{loc}}(x)=W^{\sigma}(x,\mathrm{c}_{\scriptstyle geo})$. If $B$ is foliation box of diameter less than $\mathrm{c}_{\scriptstyle geo}$, the connected component of $W^{\ast}_{\scriptstyle \mathrm{loc}}(x)\cap B$ that contains $x$ is denoted by $W^{\ast}(x, B)$.

\smallskip 
 
We start by establishing the following.

\begin{proposition}\label{pro:equicontinuidad}
For every $\h\in \CM$ the family $\{R_n\h\}_n$ is equicontinuous and uniformly bounded.
\end{proposition}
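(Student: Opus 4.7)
The uniform bound $|R_n\h(x)| \le \|\h\|_\infty$ is immediate from \eqref{eq:Rnh}, so my plan concentrates on equicontinuity. Given $x, y \in M$ with $d(x,y)$ small, I would decompose via the local product structure: let $z := \Wu{x, \mathrm{c}_{\scriptstyle geo}} \cap \Wcs{y, \mathrm{c}_{\scriptstyle geo}}$ so that $z \in \Wu{x}$ and $y \in \Wcs{z}$, and estimate $|R_n\h(x)-R_n\h(z)|$ and $|R_n\h(z)-R_n\h(y)|$ separately. For the first piece, property (a) following \cref{def:medidasproyectivas} gives $\nu^u_z = c(z,x)\,\nu^u_x$, so the scalar cancels in the ratio and the difference reduces to integrals of $\h\circ f^n$ over $I_x \triangle I_z$ against $\nu^u_x$. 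The $\nu^u_x$-measure of this symmetric difference tends to $0$ as $z \to x$ by \cref{lem:fronteraIxesnula} (no mass on $\partial I_x$) together with the Hausdorff continuity $I_z \to I_x$ and Alexandrov's portmanteau lemma; the denominators $\nu^u_w(I_w)$ are bounded below uniformly by \cref{lem:continuidadnuxIx} and compactness. This first estimate is $n$-independent.

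For the center-stable step I use the holonomy $h = h^s_{y,z}: \Wu{y} \to \Wu{z}$ induced by $\Fcs$. The decisive observation is that, since the invariant foliations are $f$-preserved,
\[
f^n \circ h^{-1} = h_n^{-1} \circ f^n,
\]
where $h_n: \Wu{f^n y} \to \Wu{f^n z}$ is the analogous holonomy at iterate $n$. Because $f = \Phi_1$ contracts $E^s$ and preserves the flow direction $E^c$, the center-stable distance is non-expanding, $d_{cs}(f^n y, f^n z) \le d_{cs}(y,z)$ uniformly in $n$, so $h_n$ displaces every point of $f^n(h(I_y))$ by at most $C\,d(y,z)$, independently of $n$. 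Performing the change of variables $t = h(s)$ in $R_n\h(y)$ and using the uniform continuity of $\h$ to absorb the small displacement,
\[
R_n\h(y) = \frac{1}{\nu^u_y(I_y)}\int_{h(I_y)} \h \circ f^n\, d(h_*\nu^u_y) + O\!\left(\omega_\h(C\, d(y,z))\right),
\]
with $\omega_\h$ the modulus of continuity of $\h$.

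The remaining (and principal) difficulty is comparing this integral with $R_n\h(z)$, since $\h\circ f^n$ has no uniform-in-$n$ modulus of continuity and mere weak convergence of $h_*\nu^u_y$ to $\nu^u_z$ is insufficient. My plan is to upgrade the weak continuity of \cref{pro:medidasinestables}(c) to an absolute-continuity statement on $\Wu{z, \mathrm{c}_{\scriptstyle geo}}$: namely, $h_*\nu^u_y = J_y \cdot \nu^u_z$ with a bounded density $J_y$ that converges to $1$ uniformly as $y \to z$. This is natural in view of the Margulis-type construction of $\mu^u$ in \cite{EqStatesCenter}. Granted this, the difference $\int_{h(I_y)} \h\circ f^n\, d(h_*\nu^u_y) - \int_{I_z} \h\circ f^n\, d\nu^u_z$ is bounded by $\|\h\|_\infty\bigl(\|J_y - 1\|_\infty\, \nu^u_z(I_z) + \nu^u_z(h(I_y)\triangle I_z)\bigr)$, which tends to $0$ as $y \to z$ uniformly in $n$ by the Hausdorff continuity and $\nu^u_z(\partial I_z) = 0$, closing the argument.
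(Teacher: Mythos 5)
Your decomposition is essentially the paper's, with one structural difference: you split $x\to z\to y$ into an unstable step and a single center-stable step, whereas the paper uses a three-step chain $x\to z\to w\to y$ (unstable, then center, then stable). The unstable step and the uniform bound are handled identically. For the transverse part, the paper separates the flow (center) holonomy from the stable holonomy precisely because each admits a clean density statement: the center holonomy is the flow, so $(\hc_{z,w})_*\nu^u_z=g_{z,w}\nu^u_w$ with $g_{z,w}\rightrightarrows 1$ follows directly from the quasi-invariance $f^{-1}\nu^u_{fx}=e^{P_{\mathrm{top}}(\varphi)-\tilde\varphi(x)}\nu^u_x$, while the stable holonomy density is the $\Jacu[w,y]$ of \cref{pro:quasinvariancecocycle}, again uniformly close to $1$. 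Your merged version requires the composite fact that the full $\Fcs$-holonomy pushes $\nu^u_y$ to $J_y\nu^u_z$ with $J_y\rightrightarrows 1$; you flag this as an ``upgrade'' of \cref{pro:medidasinestables}(c) to absolute continuity but do not prove it, whereas the paper effectively obtains it by factoring through the two sub-holonomies and invoking the two cited density statements. Both approaches then control $|\h\circ f^n\circ\hcs - \h\circ f^n|$ using that $f^n$ commutes with the flow (so center displacement is preserved but bounded) and contracts stable distance (so stable displacement shrinks); you state this as non-expansion of $d_{cs}$, the paper states the two parts separately. So: same mechanism, slightly coarser packaging on your side, with the density claim left as a stated-but-unproven lemma rather than reduced to the two cited Jacobian facts.
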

\begin{proof}
As the family $\{R_n\h\}_n$ is clearly bounded by $\normC{\h}{0}$ we only need to show that it is equicontinuous. To this end, given two points $x,y$ with $d(x,y)<\mathrm{c}_{\scriptstyle geo}$ we can choose $z$ and $w$ so that $z\in \Wuloc{x}\cap \Wcsloc{y}$ and $w\in \Wcloc{z}\cap \Wsloc{y}$. See \cref{fig:comparaumeasure} below. To show equicontinuity we will make comparisons between $x$ and $z$, $z$ and $w$ and $w$ and $y$.

\begin{figure}[H]
	\centering
		\includegraphics[scale=0.9]{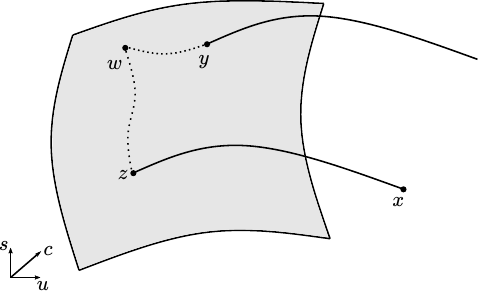}
	\caption{Diagram for the proof.}
	\label{fig:comparaumeasure}
\end{figure}

 Since $z\in\Wu{x}$ we have $\nux=c(x,z)\nux[z]$, where $c(x,z)$ converges uniformly to one as $z$ approaches $x$ (cf.\@ the remark after the definition of the measures $\nux$). Therefore
\begin{align*}
|R_n\h(x)-R_n\h(z)|&=\left|\frac{1}{\nux(I_x)}\int_{I_x} \h\circ f^n(t)d\nux(t)-\frac{1}{\nux(I_z)}\int_{I_z} \h\circ f^n(t)d\nux(t)\right|\\
&\leq\norm{\h}_{C^0}\int\left|\frac{\one_{I_x}(t)}{\nux(I_x)}-\frac{\one_{I_z}(t)}{\nux(I_z)}\right|d\nux(t)\leq \frac{2\norm{\h}_{C^0}}{\nux(I_x)}\cdot\nux(I_x\triangle I_z)
\end{align*} 
and the later term can be taken arbitrarily small if $z$ is close enough to $x$, by \cref{lem:continuidadnuxIx,lem:fronteraIxesnula}.

The comparison between $w$ and $y$ follows similar lines, using that $\Jacu[w,y]$ converges uniformly to $1$ as $w$ approaches $y$ (cf.\@ \cref{pro:quasinvariancecocycle}). Finally, for the comparison between $z$ and $w$ we observe that the center holonomy (i.e.\@ the flow) sends $\nu_z^u$ to $g_{z,w}\nu^u_w$, where $g_{z,w}$ is a continuous positive function that converges uniformly to one as $z\mapsto w$. Therefore, we get 
\begin{align*}
&\left|\frac{1}{\nux(I_z)}\int_{I_z}\h\circ f^n(t)d\nux[z](t)-\frac{1}{\nux[y](I_w)}\int_{I_w}\h\circ f^n(t)d\nux[w](t)\right|\\
&=\left|\frac{1}{\nu^u_z(I_z)}\int\Big(\one_{I_z}(t)\h\circ f^n(t)-\frac{\nux[z](I_z)}{\nux[w](I_w)}\one_{\hc_{z,w}(\tilde{I}_z)}(t)\h\circ f^n(\hc(t))\cdot g_{z,w}(t)\Big)d\nux[w](t)\right|	
\end{align*}

As $z\mapsto w$ we have $\frac{\nux[z](I_z)}{\nux[w](I_z)}\rightrightarrows 1$ as well. Since $f$ is part of the flow,  we conclude that $|\h\circ f^n(\hc(t))-\h\circ f^n(t)|$ can be taken uniformly small for $z$ close to $w$, independently of $n$. Altogether we have shown that  for every $n$, $|R_n\h(z)-R_n\h(w)|$ is small whenever $w\in\Wc{z}$ is close to $z$, and this concludes the proof of equicontinuity.
\end{proof}

\subsection{SOT convergence of \texorpdfstring{$(R_n)_n$}{Rn}.} Now we will show the central part of our argument: the family of positive operators $(R_n:\CM\to\CM)_n$ converges in the strong operator topology to some multiple of the identity. More precisely, there exists some probability measure $\mathrm{p}^u$ such that 
\[
	\h\in\CM\Rightarrow \normC{R_n\h -\int \h d\mathrm{p}^u}{0}\xrightarrow[n\to\oo]{}0. 
\]
Due to Riesz-Markov representation theorem, this amounts to establishing the following.

\begin{theorem}\label{thm:uniformconvergenceofRnh}
For every $\h\in \CM$ the sequence $(R_n\h)_n$ converges uniformly to a constant $a$.
\end{theorem}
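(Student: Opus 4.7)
The argument has three stages: pre-compactness, constancy of subsequential limits, and identification of the common constant. Proposition \ref{pro:equicontinuidad} yields that $\{R_n\h\}_n$ is equicontinuous and uniformly bounded, so Arzelà--Ascoli makes it pre-compact in $C(M)$; thus every subsequence of $(R_n\h)_n$ has a uniformly convergent sub-subsequence, and it suffices to show that every such limit $g\in C(M)$ is the same constant $a$.

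For constancy, the plan is to prove that any $g=\lim_k R_{n_k}\h$ is $\Phi^u$-invariant; combined with the continuity of $g$ and the minimality of $\Phi^u$ (valid since $\Phi$ is transitive and not a suspension), this forces $g$ to be constant on $M$. The natural starting point is the identity $I_{\Phi^u_s x}=\Phi^u_s(I_x)$, which expresses $R_n\h(\Phi^u_s x)-R_n\h(x)$ as a difference of $\nu^u_x$-averages of $\h\circ f^n$ over two horocycle intervals of essentially the same mass. However, the symmetric difference of $f^n(I_x)$ and $f^n(\Phi^u_s I_x)$ retains a relative $\nu^u_{f^n x}$-mass of order $|s|/\mathrm{c}_{\scriptstyle int}$---bounded away from zero---so the pointwise comparison at fixed $s$ does \emph{not} vanish as $n\to\infty$. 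To circumvent this, I would average over a window $s\in[0,\Delta]$: via Fubini together with the continuity of the $\nu^u_x$-density along its own horocycle (Lemma \ref{lem:continuidadnuxIx}), the $s$-mean $\frac{1}{\Delta}\int_0^\Delta R_n\h(\Phi^u_s x)\,ds$ rewrites as an $R_n$-style average along a horocycle segment of length $\sim\Delta$ around $x$, up to boundary contributions of order $\mathrm{c}_{\scriptstyle int}/\Delta$. Passing to the limit along $n_k$ and then letting $\Delta$ shrink recovers $g(\Phi^u_s x)=g(x)$ for small $s$, hence for all $s$ by iteration.

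To identify $a$, I would integrate $R_n\h$ against $\muf[\Phi,\varphi]$: using a SLY partition $\xi$ subordinated to $\Fu$ whose atoms are essentially the $I_x$, and the fact that the conditionals of $\muf[\Phi,\varphi]$ along $\Fu$ are proportional to $\nu^u_x$ (Section 2), the $f$-invariance of $\muf[\Phi,\varphi]$ gives $\int R_n\h\,d\muf[\Phi,\varphi]=\int\h\,d\muf[\Phi,\varphi]$ for every $n$; taking limits along $n_k$ pins down $a=\int\h\,d\muf[\Phi,\varphi]$, independently of the subsequence. The main obstacle is the constancy step: the non-vanishing of the naive pointwise comparison forces the window-averaging device, whose rigorous execution requires uniform control of the $\nu^u$-densities along horocycles (coming from the Hölder Jacobian family $\mathscr{h}_{\varphi}$) together with a delicate interchange of the $k\to\infty$ and $\Delta\to 0$ limits. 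Crucially, the argument cannot invoke unique ergodicity of $\Phi^u$---that statement is precisely the downstream conclusion that Theorem A extracts from the present theorem.
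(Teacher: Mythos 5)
Your pre-compactness step matches the paper, and you correctly flag constancy of subsequential limits as the real obstacle, but the fix you propose does not close the gap, and the mechanism you invoke is not the one that actually works. The window average $\frac{1}{\Delta}\int_0^\Delta R_n\h(\Phi^u_s x)\,ds$ does not rewrite as $R_n\h$ at some nearby point: the normalizer $\nu^u_x(I_{\Phi^u_s x})$ depends on $s$, so after Fubini one obtains an integral of $\h\circ f^n$ over $\Phi^u_{[-\mathrm{c}_{\scriptstyle int},\Delta+\mathrm{c}_{\scriptstyle int}]}(x)$ against a \emph{non-constant} weight; this is an average over a segment of length $\sim\Delta$, not $2\mathrm{c}_{\scriptstyle int}$, and it is not in the family $\{R_n\h(y)\}_y$. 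You also need $\Delta\gg\mathrm{c}_{\scriptstyle int}$ to make the boundary term $O(\mathrm{c}_{\scriptstyle int}/\Delta)$ small, which is incompatible with ``letting $\Delta$ shrink.'' More fundamentally, there is no a priori reason for a subsequential limit $\g$ to be $\Phi^u$-invariant: pulling back by $f^n$ (using $f^{-n}\nu^u_{f^nx}\propto\nu^u_x$), the relative $\nu^u$-mass of $f^n(I_{\Phi^u_s x})\triangle f^n(I_x)$ is \emph{independent of $n$}, so the discrepancy $R_n\h(\Phi^u_s x)-R_n\h(x)$ does not decay. $\Phi^u$-invariance of $\g$ is an output of the theorem (it becomes the constant $a$), not an available input, so minimality of $\Phi^u$ cannot be brought to bear at this stage.

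The paper's mechanism bypasses invariance entirely. Setting $a_n=\inf_x R_n\h(x)$, the renewal identity $R_{n+m}\h(x)=\int R_n\h\,d\Theta^m_{n,x}+E_{n,m}(x)$ with $\Theta^m_{n,x}$ probabilities supported on $f^m(I_x)$ (\cref{pro:existenciathetameasurecasoequicontinuo}) gives $a_{n+m}\geq a_n-\sup|E_{n,m}|$, so $a=\lim_n a_n$ exists and any subsequential limit $\g$ has $\min\g=a$. One then evaluates the renewal identity at a \emph{minimizer} of $R_{n_k+m}\h$ and passes to the limit; the decisive input is \cref{lem:boundU}, where \emph{mixing} of $\muf$ together with its full support yields the uniform lower bound $\Theta^m_{n,x}(U)\geq\mathrm{C}_U>0$ for any fixed open box $U$. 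The limit measure $\Theta$ then charges $U$, and $\int(\g-a)\,d\Theta=0$ with $\g\geq a$ forces $\g=a$ somewhere in $U$, hence everywhere by density of such $U$. So the engine is ``$\inf_x R_n\h$ is an approximate sub-martingale'' plus ``the renewal measures spread across $M$ because $\muf$ is mixing,'' not $\Phi^u$-invariance plus minimality. (A smaller point: $\int R_n\h\,d\muf\neq\int\h\,d\muf$ in general, since the $I_x$ overlap as $x$ varies and $R_n$ is not a conditional expectation for any partition; the identity $a=\int\h\,d\muf[\Phi,\varphi]$ is obtained a posteriori through the operator $E_n$ built on a genuine SLY partition in Section 4.)
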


We first determine the value $a$. Define $\Omega$ as the set of all sequences
$\{\Theta^m_n\}_{m,n\geq 0}$ of functions $\Theta^m_n:M\rightarrow \mathrm{Rad}(\Fu)$ satisfying
\begin{enumerate}
\item $\Theta^m_{n,x}:=\Theta^{m,n}(x)$ is a probability measure on $f^m(I_x)$, and
\item given $\epsilon>0$ there exists $N_{\epsilon}>0$ such that for $n,m\geq N_{\epsilon}$ it holds for every $x\in M$
\begin{equation}\label{eq:Thetameasures}
R_{n+m}\h(x)=\int R_{n}\h(y)d\Theta^m_{n,x}(y)+E_{n,m}(x)
\end{equation}
where $|E_{n,m}(x)|<\epsilon$.
\end{enumerate}
Assume for the moment that $\Omega$ is non-empty and take $\{\Theta^m_n\}_{n,m\geq 0}\in \Omega$. Define 
\[
a_{n}:=\inf_{x}R_n\h(x)
\]
and observe that by \eqref{eq:Thetameasures}
\begin{align*}
a_{n+m}\geq a_n-\sup_{x}|E_{n,m}(x)|.
\end{align*}

Let $a:=\limsup_n a_n,b=\liminf_n a_n$; these are finite numbers in virtue of \cref{pro:equicontinuidad}. Given $\epsilon>0$ take $N_{\epsilon}$ such that $n,m\geq N_{\epsilon}$ implies $\sup_{x}|E_{n,m}(x)|<\epsilon$: since any neighborhood of $a$ contains points of $(a_n)$, by the previous formula we deduce that $|b-a|<2\epsilon$, and as $\epsilon$ is arbitrary we have that $a=b$, therefore
\[
a=\lim_n a_n.
\]

Now that we have our candidate for constant function, we will proceed as follows: consider $\g\in \CM$ an accumulation point of some sub-sequence $(R_{n_k}\h)_{k}$ (which exists by equicontinuity of the family). Given an open set $U\subset M$, we will find $\Theta\in\PM$ with $U\subset \supp(\Theta)$ and $\int_U (\g-a) d\Theta=0$. Since $\g(x)\geq a$ for every $x$, this will imply the existence of some $x_U\in U$ such that  $g(x_U)=a$ on $U$. As $U$ can be chosen in a basis of the topology on $M$, we will deduce that $g=a$, thus showing $\lim_n  \normC{R_n\h-a}{0}=0$.

The remainder of this part is devoted to establishing this result.

\smallskip

Take $U\subset M$ open box of the form $U\approx \Wc{\Ws{x_0,\delta},\delta}\times \Wu{x_0,\delta}$ with $\delta$ much smaller than $\mathrm{c}_{\scriptstyle geo}$.

\medskip 

\noindent\textbf{Notation.} For $0<\rho<< \mathrm{c}_{\scriptstyle geo}$ and $S\subset M$ we denote by $\partial_{\rho} S$ the $\rho$-neighborhood of the boundary of $S$. If $U\approx \Wc{\Wu{x_0,\delta},\delta}\times \Ws{x_0,\delta}$ is a box then its $cu$-boundary of size $\rho$ is the set $\partial^{cu}_\rho U$ corresponding to $\Wc{\Wu{x_0,\delta},\delta}\times \partial_{\rho} \Ws{x_0,\delta}$.  

\smallskip

\begin{lemma}\label{lem:boundU}
If $\muf(\partial U)=0$ there exists $\mathrm{C}_U>0$ such that for every $x\in M$
\[
\liminf_{m\mapsto\oo}\frac{\nu_{f^mx}(U\cap f^m(I_x))}{\nu_{f^mx}(f^m(I_x))}>\mathrm{C}_U>0.
\]
\end{lemma}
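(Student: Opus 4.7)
The plan is to reduce the estimate to a uniform unstable-density statement for $U$ and then exploit the uniform expansion of $f$ along $\Fu$. As a first step, I would show that there exist $T=T(U),\alpha>0$ such that for every $y\in M$,
\[
g(y):=\frac{\nu^u_y(U\cap W^u(y,T))}{\nu^u_y(W^u(y,T))}\geq \alpha.
\]
Minimality of $\Phi^u$ together with compactness of $M$ yields a uniform $T$ such that $W^u(y,T)\cap U\neq\emptyset$ for every $y$; since this intersection is open in $W^u(y)$, non-atomicity and full support of $\nu^u_y$ (\cref{pro:medidasinestables}(a)) give $g(y)>0$ pointwise. Weak continuity of $\{\nu^u_y\}_y$ (\cref{pro:medidasinestables}(c)), together with transport of any compact $K\subset U\cap W^u(y_0,T)$ to nearby leaves via cs-holonomy with distortion controlled by $\Jacu$ (\cref{pro:quasinvariancecocycle}), shows that $g$ is lower semi-continuous, and compactness of $M$ then yields $\alpha:=\inf_y g(y)>0$. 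The hypothesis $\muf(\partial U)=0$ is used here via the local product description $\muf\approx\int \nu^u_{y'}\,d\mu^{cs}$: it rules out pathological concentration of $\nu^u_{y}$ on $\partial U$ as $y$ varies, so the semi-continuity bound does not degenerate.

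Second, $f$ expands $W^u$ at rate at least $\lambda^{-1}$, so for every $m\geq m_0$ (with $m_0$ independent of $x$) the unstable length of $f^m(I_x)$ is much larger than $T$; in particular $f^m(I_x)\supset W^u(f^m x,T)$. I then cover $f^m(I_x)$ by balls $W^u(y_{i,m},T)$, with $y_{i,m}\in f^m(I_x)$ spaced at unstable distance $T$, so that the cover has pointwise multiplicity at most $2$ and its union contains $f^m(I_x)$. Since all the $y_{i,m}$ lie on the same leaf $W^u(f^m x)$, the measures $\nu^u_{y_{i,m}}$ agree with $\nu^u_{f^m x}$ up to a positive scalar that cancels in any ratio, so the first step applied leaf-wise gives
\[
\nu^u_{f^m x}(U\cap W^u(y_{i,m},T))\geq \alpha\cdot \nu^u_{f^m x}(W^u(y_{i,m},T)).
\]
Summing over $i$, using the bounded multiplicity (and a negligible correction coming from \cref{lem:fronteraIxesnula} for the end balls that slightly protrude from $f^m(I_x)$), I obtain
\[
\nu^u_{f^m x}(U\cap f^m(I_x))\geq \tfrac{\alpha}{2}\,\nu^u_{f^m x}(f^m(I_x)),
\]
so $\mathrm{C}_U=\alpha/2$ works.

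The main obstacle is the first step: translating qualitative minimality of $\Phi^u$ into a quantitative, uniform-in-$y$ lower bound on the $\nu^u$-proportion of $U$ inside the fixed-radius ball $W^u(y,T)$. The subtle point is that distinct basepoints $y$ live on distinct unstable leaves, so any comparison between $\nu^u_y$ and $\nu^u_{y'}$ must go through a cs-holonomy, and the H\"older cocycle structure underlying $\mathscr{h}_\varphi$ is what keeps the distortion bounded. The hypothesis $\muf(\partial U)=0$ is the technical hook converting the qualitative openness of $U$ into quantitative continuity of $y\mapsto \nu^u_y(U\cap W^u(y,T))$, which is what the lower-semi-continuity plus compactness argument requires to produce a strictly positive infimum.
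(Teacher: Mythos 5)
Your argument is genuinely different from the paper's, and after some thought I believe it can be made to work; here is a comparison and one correction.

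The paper proves the lemma by contradiction: it rewrites the ratio as $\nu^u_x(f^{-m}U\cap I_x)/\nu^u_x(I_x)$, compares $\nu^u_x(I_x\cap f^{-n}U)$ with $\muf(B_x\cap f^{-n}U)$ for a thickened set $B_x$ using the local product structure of $\muf$ and the hypothesis $\muf(\partial U)=0$ to control the $cu$-boundary strip, and then invokes \emph{mixing} of $(f,\muf)$ plus full support of $\muf$ to force $\muf(U)=0$, a contradiction. Your route is constructive instead: establish a uniform-in-$y$ density bound $\nu^u_y(U\cap W^u(y,T))\geq\alpha\,\nu^u_y(W^u(y,T))$ using minimality of $\Phi^u$ (this is what \cref{lem:marcus} gives), full support and non-atomicity of $\nu^u_y$, and a lower-semicontinuity-plus-compactness argument; then cover $f^m(I_x)$ by $T$-balls with bounded multiplicity and sum. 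The paper's proof is the one that survives in higher codimension (where minimality is less exploitable), while yours is more elementary in the codimension-one case: it uses only minimality of $\Phi^u$ and the leafwise regularity of $\nu^u$, not the mixing of the equilibrium state nor its local product structure.

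One correction, and one under-justified step. First, you misidentify where $\muf(\partial U)=0$ enters. In your argument it is in fact \emph{not} needed: lower semicontinuity of $y\mapsto\nu^u_y(U\cap W^u(y,T))$ follows from $U$ being open together with inner regularity and \cref{pro:medidasinestables}(c) (take a pre-compact open $A$ with $\clo{A}\subset U\cap W^u(y_0,T)$ and transport by $cs$-holonomy), and boundedness of $y\mapsto\nu^u_y(W^u(y,T))$ follows from the uniformity clause in \cref{pro:medidasinestables}(c). Moreover, $\muf(\partial U)=0$ only yields $\nu^u_y(\partial U)=0$ for $\mu^{cs}$-a.e.\ $y$, so it could not underpin a pointwise-everywhere bound anyway; it is needed in the \emph{paper's} proof to kill $\muf(\partial^{cu}_\rho U)$, not in yours. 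Second, the semicontinuity claim is asserted rather than proved; it is the technical heart of your first step and deserves the compact-exhaustion/holonomy argument sketched above. Likewise, the protruding end-balls in your covering step need the analogue of \cref{lem:medidamalos} (their $\nu^u_{f^m x}$-mass relative to $\nu^u_{f^m x}(f^m(I_x))$ tends to $0$ uniformly); you flag this but should not rely on \cref{lem:fronteraIxesnula} alone, since a single-point statement does not give the uniform estimate directly. With these two points tightened, your proof is correct and in fact gives a marginally sharper statement (a uniform density rather than just a $\liminf$ bound, up to the end-ball correction).
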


\begin{remark}
Since $(f,\muf)$ is a Kolmogorov system, the above inequality holds for $\muf\aep(x)$, however for our purposes it is important to establish the validity for all $x\in M$.
\end{remark}

\begin{proof}
Start observing that $\frac{\nu_{f^mx}(U\cap f^m(I_x))}{\nu_{f^mx}(f^m(I_x))}=\frac{\nu_{x}(f^{-m}U\cap (I_x))}{\nu_{x}((I_x))}$.

Assume by means of contradiction that the conclusion is not true. Since $\inf_x \nu^u_x(I_x)>0$ we have that for every $\tau>0$ there exists $x_{\tau}$ and  $n_{\tau}$ such that for every $n\geq n_{\tau}$ it holds
\[
\nu_{x_{\tau}}(I_{x_{\tau}}\cap f^{-n}U)\leq \tau. 
\]
Choose $\rho>0$ so that $\muf(\partial^{cu}_\rho U)$ is much smaller than $\tau$, set $d_U=\diam U$ and define 
\[
	B_x=\bigcup_{\mathclap{y\in \Wc{I_x, 5d_U}}}\ I_y
\]
We want to compare the measures $\muf(B_x\cap f^{-n}U)$ and $\nu^u_x(I_x\cap f^{-n}U)$. Divide the connected components of $B_x\cap f^{-n}U$ into two types, $\Good_x^n$ and $\Bad_x^n$ where $Z\in \Good_x^n\Leftrightarrow$ for every $z\in Z$ it holds  
\begin{enumerate}
\item $f^n(\Ws{z, Z})\subset \Ws{f^nz, U}$;
\item $f^n(\Wu{z, Z}) \supset \Wu{f^nz, U}$.
\end{enumerate}
Observe that as $f$ preserves center lengths the second condition above implies  
\begin{enumerate}
	 \item[3.] $f^n(\Wcu{z, Z}) \supset \Wcu{f^nz, U}$
\end{enumerate}
It follows that if $Z\in \Bad^n_x$ then either
\begin{enumerate}
\item $Z\subset \partial_{\rho}^{cu} U$, or
\item $f^n(I_x\cap f^{-n}U\cap Z)\not \supset \Wu{f^nz, U}$. 
\end{enumerate}
Observe that we can write, 
\begin{align*}
\MoveEqLeft\frac{\muf(f^{-n}U\cap B_{x})}{\nux(f^{-n}\cap I_x)}=\frac{\muf(\bigcup_{Z\in \Good_x^n} Z)}{\nux(f^{-n}\cap I_x)}\left(1+\frac{\muf(\bigcup_{Z\in \Bad_x^n}Z)}{\muf(\bigcup_{Z\in \Good_x^n}) Z}\right)
\shortintertext{which by the local product structure of $\muf$ implies, for some uniform $\mathrm{c}_{\scriptstyle prod}$,} 
&\frac{\muf(\bigcup_{Z\in \Good_x^n} Z)}{\nux(f^{-n}\cap I_x)}\left(1+\frac{\muf(\partial^{cu}_{\rho} U)+\mathrm{c}_{\scriptstyle prod}\cdot\nux(\partial_{\lambda^{-n}}2\mathrm{c}_{\scriptstyle int})}{\muf(\bigcup_{Z\in \Good_x^n}) Z}\right).
\end{align*}
Using again the local product structure we deduce the existence of $\tilde{\mathrm{d}}>0$ satisfying: for every $x$ there exists $m_x$ so that for $n\geq m_x$,
\[
	\muf(f^{-n}U\cap B_{x})\leq \tilde{\mathrm{d}}\cdot \nux(I_x\cap f^{-n}U). 
\]
Putting all the pieces together we finally conclude that for every $\epsilon>0$ there exist $x_{\ep}\in M$ and $n_{\ep}'\in\Nat$ such that if $n\geq n_{\ep}'$ then 
\begin{equation*}
\muf(B_{x_{\tau}}\cap f^{-n}U)<\epsilon.
\end{equation*}
Since the system is mixing, we then deduce that for $n\geq n_{\ep}''$,
\[
 \muf(B_{x_{\tau}})\cdot \muf(U)< 2\ep. 	
\]
We will show below that $\inf_{x} \muf(B_{x})>0$: but then we get that $\muf(U)=0$, contradicting the fact that $\muf$ has full support. To finish the proof we establish the following.

\smallskip 
 
\noindent\textbf{Claim:} If $\lambda$ is a probability on $M$ with full support, then $\inf_{x}\lambda(B_x)>0$.

\smallskip

Suppose not: then we can find a converging sequence $x_n\xrightarrow[n\mapsto\oo]{}x$ such that the corresponding sets $B_{x_n}$ converge to $B_x$ in the Hausdorff topology, and furthermore
\[
\lambda(B_{x_n})\xrightarrow[n\mapsto\oo]{} 0. 
\]
Take $D(x,r)\subset B_x$ open disc of radius $r$, centered at $x$, and such that $\lambda(\partial D(x,r))=0$: as 
\begin{align*}
D(x_n,r)\xrightarrow[n\mapsto\oo]{\mathrm{Haus}}D(x,r)
\end{align*}
it follows that
\[
\lambda(D(x_n,r))\xrightarrow[n\mapsto\oo]{}\lambda(D(x,r)),
\]
and hence $\lambda(D(x,r))=0$, contrary to our assumption that $\lambda$ has full support.
\end{proof}

We want to sub-divide $f^m(I_x)$ in intervals that are well positioned with respect to the connected components of $f^m(I_x)\cap U$; for this we will use the following lemma (cf.\@ lemma $3.2$ in \cite{Marcus1975}).

\begin{lemma}\label{lem:Marcus}
\label{lem:marcus} Given $\ep>0$ there exists $n_{\ep}\in \Nat$ such that for every $x\in M$, the set
\[
	\{\Phi^u_{t 2\mathrm{c}_{\scriptstyle int}}(x):t=0,\cdots, n_{\ep}\}
\]
is $\ep$-dense.
\end{lemma}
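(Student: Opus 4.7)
The argument rests on the minimality of the horocyclic flow $\Phi^u$---valid in the non-suspension case by the dichotomy recalled in Section~1---combined with the compactness of $M$. I would organize the proof in two stages.

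\textbf{Stage 1 (uniform continuous density).} For each $T>0$, let $A_T\subset M$ denote the set of basepoints $x$ such that $\Phi^u_{[0,T]}(x)$ is $\epsilon/2$-dense in $M$. Covering $M$ by finitely many balls of radius $\epsilon/4$ and combining continuity of the flow with the resulting finite-intersection description, each $A_T$ is open. Minimality of $\Phi^u$ yields $\bigcup_{T>0}A_T=M$, and compactness together with the monotonicity $A_T\subset A_{T'}$ for $T\leq T'$ produces a single $T_\epsilon$ with $A_{T_\epsilon}=M$.

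\textbf{Stage 2 (discretization).} The map $(t,x)\mapsto\Phi^u_t(x)$ is uniformly continuous on $[0,T_\epsilon+2\mathrm{c}_{\scriptstyle int}]\times M$. Given $y\in M$, Stage~1 provides $s\in[0,T_\epsilon]$ with $d(\Phi^u_s(x),y)<\epsilon/2$; taking $k=\lfloor s/(2\mathrm{c}_{\scriptstyle int})\rfloor$, uniform continuity gives $d(\Phi^u_s(x),\Phi^u_{2k\mathrm{c}_{\scriptstyle int}}(x))<\epsilon/2$ provided $2\mathrm{c}_{\scriptstyle int}$ lies below the modulus of continuity of the flow at scale $\epsilon/2$. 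A triangle inequality then yields $\epsilon$-density of the discrete orbit with $n_\epsilon=\lceil T_\epsilon/(2\mathrm{c}_{\scriptstyle int})\rceil$.

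\textbf{Main obstacle.} The step $2\mathrm{c}_{\scriptstyle int}$ is fixed in advance (much smaller than $\mathrm{c}_{\scriptstyle geo}$, but only that), whereas $\epsilon$ may be arbitrarily small. When $\epsilon$ is below the flow displacement of the vector field generating $\Phi^u$ over time $2\mathrm{c}_{\scriptstyle int}$, the naive discretization in Stage~2 breaks down and a more refined device is required. I would follow Marcus' original idea and exploit the approximate commutation of $\Phi^u$ with the Anosov flow: since $\Phi_{-t_0}\Phi^u_s\Phi_{t_0}(y)=\Phi^u_{\sigma(s,t_0,y)}(y)$ with $|\sigma|$ of order $\lambda^{t_0}|s|$, the discrete orbit $\{\Phi^u_{2k\mathrm{c}_{\scriptstyle int}}(x)\}_{k=0}^{n}$ can be rewritten as the image under $\Phi_{t_0}$ of a horocyclic orbit of $\Phi_{-t_0}(x)$ whose effective step $\sim\lambda^{t_0}\cdot 2\mathrm{c}_{\scriptstyle int}$ can be made as small as desired by choosing $t_0$ large. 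Stage~1 applies directly to the rescaled orbit; the most delicate step is the return trip through $\Phi_{t_0}$, since this map has anisotropic distortion (exponential expansion along $E^u$, contraction along $E^s$). The argument must therefore separate the horocyclic-direction approximation from the transverse approximation, and invoke the H\"older continuity of the invariant splitting to keep the transverse errors controlled when transporting density back to the original scale.
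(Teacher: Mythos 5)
The paper does not prove this lemma: it invokes Lemma~3.2 of Marcus (1975) as a black box, so there is no in-house argument to compare against. Assessed on its own merits, your Stage~1 is correct and standard. Stage~2, as you yourself flag, only proves the statement for $\epsilon$ comparable to the displacement of $\Phi^u$ over time $2\mathrm{c}_{\scriptstyle int}$; since $\mathrm{c}_{\scriptstyle int}$ is fixed once and for all while $\epsilon$ is arbitrary, this is a genuine incompleteness, not a technicality. The real question is whether your sketched renormalization closes it, and I do not think it does as written.

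The circularity: conjugating by $\Phi_{t_0}$ replaces the discrete orbit $\{\Phi^u_{2k\mathrm{c}_{\scriptstyle int}}(x)\}_k$ by $\{z_k\}$ with $z_k=\Phi_{-t_0}\Phi^u_{2k\mathrm{c}_{\scriptstyle int}}(x)$, whose spacing along $E^u$ is $\sim\lambda^{t_0}\cdot 2\mathrm{c}_{\scriptstyle int}$. To conclude you must ask that some $z_k$ lie in $\Phi_{-t_0}(B(y,\epsilon))$. But $\Phi_{-t_0}$ shrinks $B(y,\epsilon)$ along $E^u$ by exactly the factor $\sim\lambda^{t_0}$ (while enlarging it along $E^s$ and keeping $E^c$ fixed), so the target's unstable width and the rescaled step shrink in lockstep and the comparison still requires $2\mathrm{c}_{\scriptstyle int}\lesssim\epsilon$. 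Your remark that one must ``separate the horocyclic-direction approximation from the transverse approximation'' points at the correct tension, but carrying out the separation does not by itself escape this, because it is precisely the $E^u$-direction that does not improve. What the lemma genuinely requires is that the discrete-time map $\Phi^u_{2\mathrm{c}_{\scriptstyle int}}$ be minimal on $M$ (uniformly so, in the sense of the lemma); equivalently, that the close-return times of the horocyclic orbit to a given point be dense modulo $2\mathrm{c}_{\scriptstyle int}$, uniformly in the base point. This is not a formal consequence of the minimality of the continuous flow that Stage~1 uses, and it is exactly the content that must be supplied (Marcus obtains it by exploiting the non-suspension hypothesis, i.e.\ the joint non-integrability of $E^s\oplus E^u$, not merely a rescaling of the step). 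As it stands, your proposal leaves that ingredient unproved.
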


To cover $f^m(I_x)$ we use the following algorithm. Denote $\mathrm{d}_U=\diam U$ and choose $0<\ep< \frac{\mathrm{d}_U}{5}$. For $x\in M$ consider $\{C_{x}^{(m)}(l)\}_{l=0}^{o(x,m)}$ the set of connected components of $f^m(I_x)\cap U$, ordered according to the orientation of $f^m(I_x)$; note that since $\mathrm{d}_U$ is small with respect to $\mathrm{c}_{\scriptstyle int}$, if $y\in C_{x}^{(m)}(l)$ then $I_y\cap U=C_{x}^{(m)}(l)$. Choose $y_{x,0}^{(m)}\in C_{x}^{(m)}(0)$ and for $n\in \Z$ let $y_{x,n}^{(m)}=\Phi^u_{n2\mathrm{c}_{\scriptstyle int}}(y_{x,0}^{(m)})$. Observe that by applying the above lemma to the family $\{y_{x,n}^{(m)}\}_{n\geq 0}$ we get 
\begin{equation}\label{eq:proporcionplacafmU}
\forall l, \#\{l\leq n\leq l+2n_{\ep}: y_{x,n}^{(m)}\in U\}\geq n_{\ep}.
\end{equation}
After choosing $y_{x,0}^{(m)}$ consider $\overline{k}_x^{m}=\overline{k}_x^m(y_{x,1}^m), \underline{k}_x^m=\underline{k}_x^m(y_{x,1}^m)$ the largest natural numbers such that $I_{y_{x,\overline{k}_x^m}^{(m)}}\subset f^m(I_x), I_{y^{(m)}_{x,-\underline{k}_x^m}}\subset f^m(I_x)$ and let
\begin{align}
&\Good^m_x:=\bigcup_{l=-\underline{k}_x^m}^{\overline{k}_x^m} I_{y_{x,l}^{(m)}}\\
&\Bad^m_x:=f^m(I_x)\setminus\Good^m_x 
\end{align}

Note that 

\begin{equation}\label{eq:medidamalos}
\frac{\nu^u_{f^mx}(\Bad^m_x)}{\nu^u_{f^m(x)}(f^m(I_x))}\leq \frac{\nu^u_{f^m(x)}(\partial_{4\mathrm{c}_{int}} f^m(I_x))}{\nu^u_{f^m(x)}(f^m(I_x))}=\frac{\nu^u_x(f^{-m}(\partial_{4\mathrm{c}_{int}} f^m(I_x))}{\nu^u_{x}(I_x)}.
\end{equation}

\begin{lemma}\label{lem:medidamalos}
$\frac{\nu^u_x(f^{-m}(\partial_{4\mathrm{c}_{int}} f^m(I_x))}{\nu^u_{x}(I_x)}\xrightarrow[m\to\oo]{}0$ uniformly in $x$.	
\end{lemma}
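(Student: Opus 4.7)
The plan relies on two ingredients: (i) the time-one map $f=\Phi_1$ expands $W^u$-arclength by at least $\lambda^{-1}$, so $f^{-m}$ contracts arclength by at most $\lambda^m$; (ii) each $\nu^u_x$ is non-atomic (\cref{pro:medidasinestables}(a)) and the family $\{\nu^u_x\}_x$ depends continuously on $x$ (\cref{pro:medidasinestables}(c) combined with the continuity of the cocycle $\Delta^u_x$). Together these reduce the statement to a uniform non-atomicity estimate for the family near $\partial I_x$.

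Concretely, once $m$ is large enough that $f^m(I_x)$ has $W^u$-length exceeding $8\mathrm{c}_{\scriptstyle int}$, the set $\partial_{4\mathrm{c}_{\scriptstyle int}} f^m(I_x)$ is a disjoint union of two sub-arcs of $W^u(f^m x)$ of length $4\mathrm{c}_{\scriptstyle int}$ located at the endpoints of $f^m(I_x)$, and by (i) its $f^{-m}$-preimage is contained in $N_{\delta_m}(\partial I_x)$ with $\delta_m := 4\mathrm{c}_{\scriptstyle int}\lambda^m$, where $N_\delta(\partial I_x)$ denotes the $\delta$-neighborhood (inside $W^u(x)$) of the two points $\Phi^u_{\pm\mathrm{c}_{\scriptstyle int}}(x)$. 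Since $\inf_x \nu^u_x(I_x)>0$ by \cref{lem:continuidadnuxIx} and compactness of $M$, it suffices to show
\[
\sup_{x\in M} \nu^u_x(N_{\delta}(\partial I_x)) \xrightarrow[\delta\to 0]{} 0.
\]

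For this, fix $\epsilon>0$: non-atomicity yields for every $x_0\in M$ some $\delta(x_0)>0$ with $\nu^u_{x_0}(N_{2\delta(x_0)}(\partial I_{x_0})) < \epsilon/2$, and one may further arrange that $\nu^u_{x_0}$ does not charge the topological boundary of this set. The continuity of $y\mapsto \nu^u_y$ (comparing $\nu^u_y$ to $\nu^u_{x_0}$ via the center-unstable holonomy, exactly as in the proof of \cref{lem:continuidadnuxIx}) together with Alexandrov's theorem and the Hausdorff convergence $N_{\delta(x_0)}(\partial I_y)\to N_{\delta(x_0)}(\partial I_{x_0})$ as $y\to x_0$ then produce an open neighborhood of $x_0$ on which $\nu^u_y(N_{\delta(x_0)}(\partial I_y)) < \epsilon$. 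A finite subcover of $M$ provides a uniform $\delta>0$, and the lemma follows upon taking $m$ so large that $\delta_m<\delta$. The main subtlety is to ensure that the continuity of the family $\{\nu^u_y\}_y$ is strong enough to absorb the simultaneous perturbation of the measure and of its support; this is precisely what Alexandrov's theorem provides when the sets in question have $\nu^u$-null topological boundary.
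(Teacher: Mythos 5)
Your proof is correct and rests on the same ingredients as the paper's: the $\lambda^m$-contraction of $f^{-m}$ reducing the boundary strips to a vanishing neighborhood of $\partial I_x$, the non-atomicity $\nu^u_x(\partial I_x)=0$, the weak continuity of $x\mapsto\nu^u_x$ via holonomy together with Alexandrov's theorem, and compactness of $M$. The only difference is presentational: the paper argues by contradiction, extracting sequences $x_k\to x$, $m_k\to\infty$ and transporting the putative mass to $\partial I_x$, whereas you phrase it directly as $\sup_x\nu^u_x(N_\delta(\partial I_x))\to 0$ and cover $M$ finitely; your explicit bound $\delta_m=O(\lambda^m)$ is left implicit in the paper, but the substance is identical.
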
	

\begin{proof}
Assume that this were not the case: then we could find $\zeta>0$, a converging sequence $x_k\xrightarrow[k\mapsto\oo]{} x$ and corresponding $m_k\xrightarrow[k\mapsto\oo]{}\oo$ such that 
\begin{itemize}[leftmargin=*]
\item $\frac{\nu^u_{x_k}(f^{-m_k}(\partial_{4\mathrm{c}_{int}} I_{f^{m_k}(x_k)}))}{\nu^u_{x_k}(I_{x_k})}\geq \zeta$.
\item $\clo{I_{x_k}}\xrightarrow[k\mapsto\oo]{\mathrm{Haus}}\clo{I_x}$.
\end{itemize}

For $k$ large the set $\mathrm{hol}^{cs}_{x, x_k}\left(f^{-m_k}(\partial_{4\mathrm{c}_{int}} I_{f^{m_k}(x_k)})\right)$ 
is contained in a $\delta_k$-neighborhood of $\partial I_x$ inside $\Wu{x}$. Since $f^{-1}$ is uniformly contracting on unstable leaves, it follows that one can take $\delta_k\mapsto 0$, and using that the measures $\nu^u_{x_k}$ and $\nu^u_{x}$ are comparable we finally deduce the existence of $\zeta'>0$ such that 
\[
\nu^u_{x}(\partial_{\delta_k} I_{x})\geq \zeta'.
\]

This contradicts the fact that $\nu^u_x(\partial I_x)=0$.
\end{proof}

On the other hand, by \cref{lem:boundU,eq:proporcionplacafmU} and since the sizes of $\{\nux(I_x)\}_{x\in M}$ are uniformly comparable, we deduce the following.

\begin{lemma}
\label{lem:medidaplacasUIx} There exists $\mathrm{c}_{U}'>0$ such that 
\[
	\forall x\in M,\ \liminf_{m\to\oo}  \frac{\nux[f^mx](\{Z\in \Good^m_x:Z\cap U\neq \emptyset\})}{\nux[f^mx](\{Z\in \Good^m_x\})}\geq \mathrm{c}_U'>0.
\]
\end{lemma}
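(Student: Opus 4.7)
My strategy is to bound the numerator from below by an inclusion, rather than by counting plaques. Every point of $\Good^m_x\cap U$ lies in some plaque $I_{y_{x,l}^{(m)}}$ of $\Good^m_x$, which thereby meets $U$; consequently
\[
\nux[f^mx]\bigl(\{Z\in\Good^m_x : Z\cap U\neq\emptyset\}\bigr)\;\geq\;\nux[f^mx](U\cap \Good^m_x).
\]

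Writing $\Good^m_x = f^m(I_x)\setminus \Bad^m_x$ decomposes both numerator and denominator in terms of $\nux[f^mx](f^m(I_x))$. Setting
\[
A_m(x) := \frac{\nux[f^mx](U\cap f^m(I_x))}{\nux[f^mx](f^m(I_x))},\qquad B_m(x):=\frac{\nux[f^mx](\Bad^m_x)}{\nux[f^mx](f^m(I_x))},
\]
and using $\nux[f^mx](U\cap \Good^m_x)\geq \nux[f^mx](U\cap f^m(I_x))-\nux[f^mx](\Bad^m_x)$ together with $\nux[f^mx](\Good^m_x)\leq \nux[f^mx](f^m(I_x))$, the target ratio is at least $(A_m(x)-B_m(x))/(1-B_m(x))$ provided $B_m(x)<1$.

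I then invoke \cref{lem:boundU}, which yields $\liminf_m A_m(x)>\mathrm{C}_U>0$ for every $x\in M$, and \eqref{eq:medidamalos} combined with \cref{lem:medidamalos}, which yields $B_m(x)\to 0$ uniformly in $x$. Consequently
\[
\liminf_{m\to\oo}\frac{A_m(x)-B_m(x)}{1-B_m(x)}\;\geq\;\mathrm{C}_U
\]
for every $x\in M$, so one may set $\mathrm{c}_U':=\mathrm{C}_U/2$.

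The main obstacle one might fear is that individual plaques of $\Good^m_x$ could carry $\nux[f^mx]$-masses differing by arbitrarily large factors; this would derail a direct ``count the plaques that meet $U$'' strategy based on the sample-point density \eqref{eq:proporcionplacafmU}. The argument above sidesteps this issue entirely by reasoning with total mass, so the density \eqref{eq:proporcionplacafmU} and \cref{lem:Marcus} are needed only upstream in the construction of $\Good^m_x$ (to guarantee it covers $f^m(I_x)$ up to a negligible boundary); they need not be revisited here.
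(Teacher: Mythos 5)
Your proof is correct, and it is cleaner than the route the paper's one-line justification suggests. The paper cites \cref{lem:boundU}, the plaque-density estimate \eqref{eq:proporcionplacafmU}, and the uniform comparability of the quantities $\{\nux(I_x)\}_{x\in M}$, which points to a counting argument: a positive fraction of the plaques in $\Good^m_x$ meet $U$, and since all plaques carry comparable mass this transfers to a mass lower bound. You avoid both of those extra ingredients by the elementary inclusion $U\cap\Good^m_x\subset\bigcup\{Z\in\Good^m_x:Z\cap U\neq\emptyset\}$, which gives the numerator a lower bound for free, and then absorb the discrepancy into $\Bad^m_x$ using \cref{lem:medidamalos} and \eqref{eq:medidamalos}. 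The only inputs you actually need are the full-mass estimate of \cref{lem:boundU} and the uniform vanishing of the bad fraction, both already established for all $x$; in particular you never need to compare masses of individual plaques, only aggregate masses, so the argument is slightly more robust. Two small bookkeeping remarks. First, passing to $\liminf_m (A_m-B_m)/(1-B_m)\geq\mathrm{C}_U$ requires $B_m(x)<1$ for $m$ large (so the denominator is positive) and $A_m(x)-B_m(x)\geq 0$ eventually; both follow from $B_m\to 0$ uniformly together with $\liminf_m A_m>\mathrm{C}_U$, and then $\liminf_m(A_m-B_m)\geq \liminf_m A_m-\limsup_m B_m>\mathrm{C}_U$ does the job, so you may even take $\mathrm{c}_U'=\mathrm{C}_U$. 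Second, recall that \cref{lem:boundU} is proved under the standing assumption $\muf(\partial U)=0$; that hypothesis should be carried along here as well, as the paper does implicitly when choosing $U$.
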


Now we prove that $\Omega\neq \emptyset$.

\begin{proposition}\label{pro:existenciathetameasurecasoequicontinuo}
There exist $\{\Theta^m_n\}_{m,n\geq 0}\in \Omega$ and $N(U)\in\Nat, \mathrm{C}_U>0$ such that for $n,m\geq N(U),x\in M$ it holds
\[
\Theta^m_n(U)\geq \mathrm{C}_U.
\]
\end{proposition}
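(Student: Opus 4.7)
The plan is to take $\Theta^m_{n,x}$ to be a weighted sum of Dirac masses at the centers $y^{(m)}_{x,l}$ of the sub-intervals forming $\Good^m_x$, completed by the restriction of $\nu^u_{f^mx}$ to $\Bad^m_x$:
\[
\Theta^m_{n,x}:=\sum_{l=-\underline{k}^m_x}^{\overline{k}^m_x} w_l\,\delta_{y^{(m)}_{x,l}} + \frac{\nu^u_{f^mx}|_{\Bad^m_x}}{\nu^u_{f^mx}(f^m(I_x))},\qquad w_l:=\frac{\nu^u_{f^mx}(I_{y^{(m)}_{x,l}})}{\nu^u_{f^mx}(f^m(I_x))}.
\]
Because $\Good^m_x$ and $\Bad^m_x$ partition $f^m(I_x)$, $\Theta^m_{n,x}$ is a probability measure on $f^m(I_x)$. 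The dependence on $n$ is in fact vacuous, but the index is retained to match the definition of $\Omega$.

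Membership in $\Omega$ will follow from a direct computation. The proportionality $\nu^u_{y^{(m)}_{x,l}}=c_l\,\nu^u_{f^mx}$ on the shared leaf $\Wu{f^mx}$ yields the clean identity
\[
R_n\h(y^{(m)}_{x,l})=\frac{1}{\nu^u_{f^mx}(I_{y^{(m)}_{x,l}})}\int_{I_{y^{(m)}_{x,l}}}\h\circ f^n\,d\nu^u_{f^mx},
\]
so that $w_l R_n\h(y^{(m)}_{x,l})=\nu^u_{f^mx}(f^m(I_x))^{-1}\int_{I_{y^{(m)}_{x,l}}}\h\circ f^n\,d\nu^u_{f^mx}$. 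Summing over $l$, whose associated intervals essentially partition $\Good^m_x$, gives
\[
\sum_l w_l R_n\h(y^{(m)}_{x,l})=\frac{1}{\nu^u_{f^mx}(f^m(I_x))}\int_{\Good^m_x}\h\circ f^n\,d\nu^u_{f^mx},
\]
whereas $R_{n+m}\h(x)$ is the same expression with $\Good^m_x$ replaced by $f^m(I_x)$. Including the residual term, one obtains
\[
|E_{n,m}(x)|\le 2\,\norm{\h}_{C^0}\cdot\frac{\nu^u_{f^mx}(\Bad^m_x)}{\nu^u_{f^mx}(f^m(I_x))},
\]
which tends to zero uniformly in $x$ and $n$ as $m\to\infty$ by \cref{lem:medidamalos}. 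Hence $\{\Theta^m_n\}\in\Omega$.

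For the lower bound $\Theta^m_{n,x}(U)\ge \mathrm{C}_U$, note that the Dirac part contributes $\sum_{l:y^{(m)}_{x,l}\in U}w_l$. \cref{lem:medidaplacasUIx} already provides a uniform positive lower bound for the closely related sum $\sum_{l:I_{y^{(m)}_{x,l}}\cap U\neq\emptyset}w_l$; the task thus reduces to bridging these two index sets. This bridge rests on two ingredients. First, by construction $y^{(m)}_{x,0}\in U$ and, by \cref{eq:proporcionplacafmU} applied to the orbit $\{y^{(m)}_{x,l}\}_l$ under $\Phi^u_{2\mathrm{c}_{\scriptstyle int}}$, a positive lower density of centers visits $U$. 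Second, a bounded-distortion estimate---obtained from the H\"older regularity of $\tilde\varphi$ and the exponential contraction of $f^{-1}$ along unstables via the product formula for $\Delta^u$---shows that $w_l$ and $w_{l+1}$ differ only by a universal multiplicative constant. Together these two facts convert the count-theoretic density of the visits $y^{(m)}_{x,l}\in U$ into the required measure-theoretic lower bound, giving $\mathrm{C}_U>0$ for all $m,n\ge N(U)$.

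The main obstacle is precisely this last passage. Since $U$ may have diameter much smaller than $2\mathrm{c}_{\scriptstyle int}$, a connected component of $f^m(I_x)\cap U$ need not contain any center $y^{(m)}_{x,l}$, even though one or two of the intervals $I_{y^{(m)}_{x,l}}$ will meet it; consequently \cref{lem:medidaplacasUIx} does not by itself bound $\Theta^m_{n,x}(U)$ from below. The combination of equidistribution through $U$ with bounded-distortion control of the weights is what compensates for this discrepancy, and neither ingredient alone would suffice.
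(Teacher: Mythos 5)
Your construction of $\Theta^m_{n,x}$ (Dirac masses at the $y^{(m)}_{x,l}$ weighted by $c^{(m)}_l(x)$, plus the normalized restriction of $\nu^u_{f^mx}$ to $\Bad^m_x$) is exactly the one used in the paper, and your verification that $\{\Theta^m_n\}\in\Omega$ via \cref{lem:medidamalos} matches the paper's computation line for line. For the lower bound the paper writes $\Theta^m_{n,x}(U)\geq\sum_l c^{(m)}_l(x)\geq\mathrm{C}_U$ and invokes \cref{lem:boundU,lem:medidaplacasUIx}, leaving implicit both the restriction of the sum to those $l$ with $y^{(m)}_{x,l}\in U$ and the passage from ``intervals $I_{y^{(m)}_{x,l}}$ meeting $U$'' to ``centers $y^{(m)}_{x,l}$ lying in $U$''; your bounded-distortion-plus-density bridge spells out precisely this implicit step, relying on the same ingredients (\cref{eq:proporcionplacafmU}, the uniform comparability of $\nu^u_z(I_z)$, and the H\"older control of $\Delta^u$ along bounded unstable distances) that the paper itself uses to justify \cref{lem:medidaplacasUIx}. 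So this is the same argument; your treatment of the final estimate is, if anything, a shade more careful than the paper's.
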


\begin{proof}
Let us start by computing 
\begin{align*}
&R_{n+m}\h(x)=\frac{1}{\nux(I_x)}\int_{I_x}\h\circ f^{n+m}(t)d\nux(t)=\frac{1}{\nux[f^mx](f^m(I_x))}\int_{f^m(I_x)}\h\circ f^{n}(t)d\nu^u_{f^m(x)}(t)\\
&=\frac{1}{\nux[f^m(x)](f^m(I_x))}\sum_{l=-\underline{k}_x^m}^{\overline{k}_x^m}\int_{I_{y_{x,l}^{(m)}}}\h\circ f^nd\nu^u_{f^m(x)}
+\frac{1}{\nux[f^m(x)](f^m(I_x))}\int_{\Bad^m_x}\h\circ f^nd\nux[f^m(x)]\\
&=\frac{1}{\nux[f^m(x)](f^m(I_x))}\sum_{l=-\underline{k}_x^m}^{\overline{k}_x^m}  \frac{\nux[f^m(x)](I_{y_{x,l}^{(m)}})}{\nux[y_{x,l}^{(m)}](I_{y_{x,l}^{(m)}})} \int_{I_{y_{x,l}^{(m)}}}\h\circ f^nd\nux[y_{x,l}^{(m)}]\\
&+\frac{1}{\nux[f^m(x)](f^m(I_x))}\int_{\Bad_x^m}\h\circ f^nd\nux[f^m(x)].
\end{align*}

Denote $c^{(m)}_l(x)=\frac{\nux[f^m(x)](I_{y_{x,l}^{(m)}})}{\nux[f^m(x)](f^m(I_x))}$ and define 
\[
\Theta^m_{n,x}=\sum_{l=-\underline{k}_x^m}^{\overline{k}_x^m} c^{(m)}_l(x)\delta_{y_{x,l}^{(m)}}+\frac{\nux[f^m(x)](\ \cdot\ \cap \Bad_x^m)}{\nu^u_{f^m(x)}(f^m(I_x))};	
\]
this is a probability measure supported on $f^m(I_x)$, and by the computation above
\[
R_{n+m}\h(x)=\int R_n\h(t)d\Theta^m_{n,x}(t)+H^{(n)}_m(x)
\]
with $H^{(n)}_m(x)=\frac{1}{\nux[f^m(x)](f^m(I_x))}\int_{\Bad_x^m}\left(\h\circ f^n-R_n\h\right)d\nux[f^m(x)]$, which by \eqref{eq:medidamalos} and \cref{pro:equicontinuidad} satisfies
\begin{align*}
|H^{(m)}_{n}(x)|\leq 2\normC{\h}{0}\frac{\nu^u_{f^mx}(\Bad^m_x)}{\nu^u_{f^m(x)}(f^m(I_x))}\xrightarrow[m\to\oo]{}0
\end{align*}
uniformly in $x,n$. It remains to show that $\Theta^m_{n,x}(U)$ is uniformly bounded from below: we compute
\begin{align*}
\Theta^m_{n,x}(U)&\geq \sum_{l=-\underline{k}_x^m}^{\overline{k}_x^m} c^{(m)}_l(x)\shortintertext{which by \cref{lem:boundU,lem:medidaplacasUIx} satisfies for $m$ large}
&\geq \mathrm{C}_U:=C_Uc_U'. 
\end{align*}
\end{proof}

We are ready to establish the SOT convergence of $\{R_n\}_n$.

\begin{proof}[Proof of \cref{thm:uniformconvergenceofRnh}]
We will use the notation of the previous proposition. Letting $a_n=\inf R_n\h(x)$ it follows, since $\Omega\neq \emptyset$ that $\exists \lim_n a_n=\sup a_n= a$. Proposition \ref{pro:equicontinuidad} and Arzel\'{a}-Ascoli's theorem imply that $\{R_nh\}_n$ is pre-compact; we will show that it only accumulates on the constant function $a$, thus establishing the theorem. To this end, let $(n_k)_{k}$ be any sub-sequence such that $\{R_{n_k}h\}_k$ is convergent to some function $\g\in \CMc[M]$: necessarily $\min \g=a$.
	
Observe that for every $m$ it holds
\[
R_{n_k+m}\h(x)-a=
\sum_{l=-\underline{k}_x^m}^{\overline{k}_x^m} c^{(m)}_l(x) \left(R_{n_k}\h(y_{x,l}^{(m)})-a\right)+H^{(m)}_{n_k}(x)-a\left(1-\sum_{l=-\underline{k}_x^m}^{\overline{k}_x^m} c^{(m)}_l(x) \right).
\]

Take $x_{n_k+m}$ such that $R_{n_k+m}\h(x_{n_k+m})=a$: then  
\begin{equation}\label{eq:medidastheta1}
0=\sum_{l} c^{(m)}_l(x_{n_k+m})\left(R_{n_k}\h(z_{k,l}^{(m)})-a\right)+ H^{(m)}_{n_k}(x_{n_k+m})-a\left(1-\sum_l c^{(m)}_l(x_{n_k+m}) \right).
\end{equation}
Here the $z_{k,l}^{(m)}$ are associated to the points $x_{n_k+m}$. Now consider the sequence of  measures 
\[
\Theta^m_{(n_k)}=\Theta^m_{n_k,x_{n_k+m}},
\]
  
Again, using that $\frac{\nu^u_{f^mx}(\Bad^m_x)}{\nu^u_{f^m(x)}(f^m(I_x))}\xrightarrow[m\to\oo]{}0$ uniformly in $x$, we get that $\sum_l c^{(m)}_l(x)\xrightarrow[m\to\oo]{}1$ uniformly in $x$, and therefore
\[
H^{(m)}_{n_k}(x_{n_k+m})-a\left(1-\sum_l c^{(m)}_l(x_{n_k+m}) \right)\to 0
\]
as $m\to\infty$, uniformly for $n_k\to\infty$. By using the diagonal argument  we can find sub-sequences $\{n_{k'}\}_{k'}$ of $\{n_k\}_k$,  $\{m_j\}\subset \mathbb{N}$ and a measure $\Theta$ such that
\[
\lim_{k'\mapsto\oo}\lim_{j\mapsto\oo}\Theta^{(m_j)}_{n_{k'}}=\Theta.
\]
Note that by the previous proposition, $\Theta(U)\geq \mathrm{c}_U'>0$. Take limit as $m_j\to\infty$ and $n_k\to\infty$ simultaneously in equation \eqref{eq:medidastheta1} to get, on the one hand
\[
\int  \left(R_{n_k}\h-a\right) d\Theta^{(m_j)}_{n_k}\to 0
\]
and on the other,
\[\int  \left(R_{n_k}\h-a\right) d\Theta^{(n_k)}_{m_j}\to \int  \left(\g-a\right) d\Theta,
\]
so that 
\[\int  \left(\g-a\right) d\Theta=0.
\]
Finally, since $\min \g=a$, $\g-a\geq 0$ and thus
\[
0=\int  \left(\g-a\right) d\Theta\geq \int_U \left(\g-a\right) d\Theta\geq \inf_U (\g-a)\widetilde{C}(U) 
\]
which in turn implies that $\g-a$ has a zero in $U$. Observe that $U$ can be taken arbitrarily small (as long as $\muf(U)=0$), therefore we deduce that $\g\equiv a$ on $M$.

We have shown that the only accumulation point of $(R_n\h)_n$ is the constant function $a$, hence $\lim_n \normC{R_n\h-a}{0}=0$ as we wanted to show.
\end{proof}

\section{Proof of uniqueness: theorems A and B} 
\label{sec:proof_of_uniqueness_theorems_a_and_b}

Now we fix $\xi$ a SLY partition for the flow, and let 
\begin{equation}
	\mathcal{N}:=\bigcup_{n\geq 0}f^{-n}\bigcup_{x\in M} \partial\xi(x)
\end{equation}
By \cref{thm:Bforrankone} it holds that $\mm(\mathcal{N})=0$ for any equilibrium state $\mm$ of the system, in particular for $\mm=\muf$. For $n\in \Nat$ we consider also $\xi^n=f^n\xi$ (i.e.\@ $x\in\mathcal{N}, \xi^n(x)=f^n\xi(f^{-n}x)$). It is also no loss of generality to assume that for every $x\not\in N$ the atom $\xi(x)$ contains a neighborhood of $x$ inside $\Wu{x}$.

Given $\h\in \CM[M], x\in M\setminus \mathcal{N}$ define 
\begin{equation}\label{eq:Enh}
E_n\h(x)=\frac{1}{\nu^u_x(\xi^n(x))}\int_{\xi^n(x)} \h(t) d\nu^u_x(t)=\frac{1}{\nu^u_{f^{-n}x}(\xi(f^{-n}x))}\int_{\xi(f^{-n}x)} \h\circ f^n(t) d\nu^u_{f^{-n}x}(t).
\end{equation}

It follows that  
\begin{enumerate}
	\item $\sup_x |E_n\h(x)|\leq \normC{\h}{0}$;
	\item for every $\mm\in \PM[M]$ having conditionals given by $\{\nux\}_{x\in M}$ (in particular, for every equilibrium state by \cref{thm:Bforrankone}), $E_n\h$ is a version of the conditional expectation $\ie{\mm}{\h|\xi^n}$.
\end{enumerate}

We now argue similarly as we did in the previous part: given $x\in M\setminus \mathcal{N}$ and $m$ large, cover $f^m(\xi(x))$ as follows: consider $I_{z^{m}_{0,x}},\cdots, I_{z^{m}_{o(x,m),x}}\subset f^m(\xi(x))$ a maximal disjoint family, and call $\Good^m_x=\bigcup_{l=0}^{o(x,m)}I_{z^{m}_{0,x}}$, $\Bad^m_x=f^m(I_x)\setminus \Good^m$.

Observe that $\Bad^m_x\subset \partial_{2\mathrm{c}_{\scriptstyle int}}f^m\xi(x)$, and since $\nux(\partial \xi(x))=0$ we can argue as in \cref{lem:medidamalos} and deduce
\begin{equation}\label{eq:medidamalospuntual}
	\lim_{m\to \oo}\frac{\nux[f^mx](\cup_{Z\in \Bad^m_x}Z)}{\nux[f^mx](\cup_{Z\in \Good^m_x}Z)}=0.
\end{equation}

Now compute, as in the proof \cref{pro:existenciathetameasurecasoequicontinuo},
\begin{align*}
E_{n}\h(x)=\frac{1}{\nux(\xi^n(x))}\int_{\xi^n(x)} \h(t) d\nu^u_x(t)=\sum_{l=0}^{o(x,n)} \frac{\nux(I_{z^{n}_{l,x}})}{\nux(\xi^n(x))}R_n\h(z^{n}_{l,x})+\frac{1}{\nux(\xi^n(x))}\int_{\Bad^n_x}h(t) d\nux(t)\\
=\int R_n\h(t)d\Theta_{n,x}(t)+T_{n,x} 
\end{align*}
where $\Theta_{n,x}$ is a probability supported on $\xi^{n}(x)$ and $T_{n,x}$ converges to zero as $n\mapsto \oo$, due to \cref{eq:medidamalospuntual}. Taking an accumulation point of $\{\Theta_{n,x}\}$ and using the uniform convergence of $\{R_n\h\}_{n\geq 0}$ we deduce
\begin{equation}\label{eq:converEE}
\lim_n E_n\h(x)=a\quad \forall x\not\in\mathcal{N}.
\end{equation}

\begin{proof}[Proof of Theorem B] 
Let $\mm\in \PM[M]$ be such that its conditionals in $\xi$ are given by $\{\nux\}_{x}$, i.e.
\[
	\mm^{\xi}_x=\nux(\cdot | \xi(x))\quad \mm\aep(x).
\]
In this case, $\mm^{\xi^n}=\nux(\cdot | \xi^n(x))$. Now for $\h\in\CM$ we can compute
\begin{align*}
\int \h d \mm=\int E_n\h(x) d\mm(x)\xrightarrow[n\to\oo]{}\int a d\mm(x)=a
\end{align*}
where we have used \eqref{eq:converEE} and the fact $\mm(M\setminus\mathcal{N})=1$. Since $a$ does not depend on $\mm$ we conclude
\[
	\int \h d\mm=\int \h d\muf\quad \forall \h\in\CM
\]
and therefore, $\mm=\muf$.
\end{proof}

\begin{proof}[Proof of Theorem A]
Let $\tilde{\mu}^{cs}=\{\tilde{\mu}^{cs}_x\}_{x\in M}$ be a quasi-invariant measure for $\Phi^u$ with Jacobian determined by $h\in\Coc$. Construct the corresponding probability measure $\mm$ on $M$ as explained in the last part of the Second section.  By construction $\mm$ has conditionals given by the family $\{\nux\}$, hence by \hyperlink{theoremB}{Theorem B} $\mm=\muf[\Phi,\varphi]$. From here we deduce, using the local product structure of $\mm$ that $\tilde{\mu}^{cs}=\mu^{cs}$. 
\end{proof}

	\newpage
	\printbibliography
\end{document}